\newtheorem{theorem}{Theorem}[section]
\newtheorem{lemma}[theorem]{Lemma}
\theoremstyle{definition}
\newtheorem{definition}[theorem]{Definition}
\newtheorem{remark}[theorem]{Remark}
\numberwithin{equation}{section}
\begin{document}

\title[Schwarz lemma for harmonic functions in the unit ball]{Schwarz lemma for harmonic functions in the unit ball}

\author[Z.  Xu]{Zhenghua Xu}
\author[T.  Yu]{Ting Yu}
\author[Q.  Huo]{Qinghai Huo}
%\thanks{This work was supported by the Natural Science Foundation of Anhui Province (No. 2308085MA04) and the   National Natural Science Foundation of China  (No. 11801125). }
\address{Zhenghua Xu, School of Mathematics, Hefei University of Technology, Hefei 230601, China}
\email{zhxu$\symbol{64}$hfut.edu.cn}

\address{Ting Yu, School of Mathematics, Hefei University of Technology, Hefei 230601, China}
\email{2022111460$\symbol{64}$mail.hfut.edu.cn}

\address{Qinghai Huo, School of Mathematics, Hefei University of Technology, Hefei 230601, China}
\email{hqh86$\symbol{64}$mail.ustc.edu.cn}

\keywords{Harmonic function, Schwarz-Pick lemma, hyperbolic metric}
\subjclass[2010]{Primary  31B05; Secondary 30C80, 30F45}
\begin{abstract}
Recently, it is proven that positive harmonic functions defined in the   unit disc or the upper half-plane in $\mathbb{C}$   are contractions in hyperbolic metrics \cite{Markovic}. Furthermore,  the same result does not hold in higher dimensions  as   shown by given counterexamples \cite{Melentijevic-P}. In this paper, we shall show that positive (or bounded) harmonic functions defined in the unit ball in $\mathbb{R}^{n}$  are  Lipschitz  in   hyperbolic metrics.
The involved method   in main results    allows  to  establish essential improvements of  Schwarz type inequalities  for monogenic functions in Clifford analysis \cite{Zhang14,Zhang16} and octonionic analysis  \cite{Wang-Bian-Liu} in a unified  approach.
\end{abstract}
\maketitle
\section{Introduction}

Let $\mathbb{B}_{n}$ be the open unit ball and  $\mathrm{\mathbb{H}}_{n}$  be the upper half-space in $\mathbb{R}^{n}$, respectively.  Specially,  $\mathbb{B}_{2}$ and $\mathrm{\mathbb{H}}_{2}$ is denoted as $\mathbb{D}$ and $\mathbb{H}$, identified with the open unit disc  and the upper half-plane of $\mathbb{C}$.  %the upper half-plane $\mathbb{H}=\{ z\in \mathbb{C}:   {\rm Im} z>0 \}$
 The classical Schwarz-Pick lemma states that   holomorphic functions  $f: \mathbb{D}\rightarrow \mathbb{D}$ satisfy
\begin{eqnarray}  \label{Schwarz-Pick-der}
|f'(z)|\leq \frac{1-|f(z)| ^{2}}{1-|z| ^{2}}, \quad  z\in \mathbb{D},
\end{eqnarray}
and\begin{eqnarray}  \label{Schwarz-Pick}
|\varphi_{f(w)}(f(z))|\leq |\varphi_{w}(z)|, \quad z,w\in \mathbb{D},
\end{eqnarray}
where $\varphi_{w}(z)=(w-z)(1-\overline{w}z)^{-1}$ is the M\"{o}bius transformation  of $\mathbb{D}$ onto itself.

Recall that  the hyperbolic metric on $\mathbb{D}$ is given by
$$ d_{\mathbb{D}}(z,w)=\log \frac{1+|\varphi_{w}(z)|}{1-|\varphi_{w}(z)|}=2\tanh^{-1}(|\varphi_{w}(z)|), \quad z,w\in \mathbb{D}.$$
 Note that $\tanh^{-1}$ is monotone increasing, then (\ref{Schwarz-Pick}) can be rewritten as
$$d_{\mathbb{D}}(f(z),f(w))\leq d_{\mathbb{D}}(z,w),  \quad z,w\in \mathbb{D}. $$
That is to say every holomorphic function $f:\mathbb{D}\rightarrow \mathbb{D}$ is a contraction with respect to the hyperbolic metric on   $\mathbb{D}$.

In 2012, Kalaj and Vuorinen in \cite[Theorem 1.12]{Kalaj-Vuorinen} proved that, for harmonic functions $f:\mathbb{D} \rightarrow (-1,1)$,
\begin{eqnarray}  \label{Kalaj-Vuorinen}
|\triangledown f (z)|\leq \frac{4}{\pi} \frac{1-|f(z)| ^{2}}{1-|z| ^{2}}, \quad  z\in \mathbb{D},
\end{eqnarray}
where the constant $4/\pi$ is sharp.

Equivalently,  harmonic functions $f:\mathbb{D} \rightarrow (-1,1)$  are  Lipschitz  in the hyperbolic metric, i.e.,
$$d_{\mathbb{D}}(f(z),f(w))\leq \frac{4}{\pi} d_{\mathbb{D}}(z,w), \quad z,w\in \mathbb{D},$$
which holds also for   harmonic functions  defined in hyperbolic plane domains (see \cite[Theorem 4]{Melentijevic-M}).

In 2013, Chen in \cite[Theorem 1.2]{Chen} obtained a sharper  version of (\ref{Kalaj-Vuorinen})
$$|\triangledown f (z)|\leq \frac{4}{\pi} \frac{\cos \frac{\pi|f(z)|}{2} }{1-|z| ^{2}}, \quad  z\in \mathbb{D},$$
which was generalized into pluriharmonic functions (see \cite[Theorem 1.5]{Xu}).
%and \cite[Theorem 2.3]{Kalaj}

Motivated by the result of Kalaj and Vuorinen,  Markovi\'{c} in 2015 showed \cite[Theorem 1.1]{Markovic} that harmonic functions $f:\mathbb{H} \rightarrow \mathbb{R}^{+}=(0,+\infty)$ are  contractible   in the hyperbolic metric, i.e.,
\begin{eqnarray}  \label{Markovic}
d_{\mathbb{R}^{+}}(f(z),f(w))\leq   d_{\mathbb{H}}(z,w), \quad z,w\in \mathbb{H},\end{eqnarray}
where  the hyperbolic  metric $d_{\mathbb{H}}$ on the upper half-plane $\mathbb{H}$ is given by
$$d_{\mathbb{H}}(z,w)=2\tanh^{-1}\Big| \frac{z-w}{\overline{z}-w}\Big|, \quad z,w\in \mathbb{H}.$$
In particular, the hyperbolic distance $d_{\mathbb{R}^{+}}$ on $\mathbb{R}^{+}$ is
$$ d_{\mathbb{R}^{+}}(x,y)=d_{\mathbb{H}}(ix,iy)=|\log\frac{x}{y}|, \quad x,y\in \mathbb{R}^{+}.$$

In \cite{Melentijevic-P},  Melentijevi\'{c} established  some refinements of Schwarz's lemma for holomorphic functions with the invariant gradient and gave another proof of (\ref{Markovic}) based on Harnack inequality. By using the same strategy, one can show that  harmonic functions $f:\mathbb{D} \rightarrow \mathbb{R}^{+}$ are also contractible   in the hyperbolic metric,
\begin{eqnarray}  \label{Markovic-disc}d_{\mathbb{R}^{+}}(f(z),f(w))\leq   d_{\mathbb{D}}(z,w), \quad z,w\in \mathbb{D}.\end{eqnarray}
 Furthermore, Melentijevi\'{c}  provide counterexamples to show that these results  in (\ref{Markovic}) and (\ref{Markovic-disc}) do not hold in higher dimensions for positive harmonic functions defined in $\mathbb{B}_n$ or $\mathbb{H}_n$ when $n\geq3$; see \cite[Example 1 and Example 2]{Melentijevic-P}.

In fact, up to multiplying a constant depending on the dimension,  these results  in (\ref{Markovic}) and (\ref{Markovic-disc}) still hold for  positive harmonic functions defined in higher dimensions. To be more precise, we shall establish the following result in this paper.
\begin{theorem} \label{Main-Ball}
Let $n\geq2$ be integer and   $f:\mathbb{B}_{n} \rightarrow \mathbb{R}^{+}$   be a harmonic function. Then
\begin{eqnarray}  \label{Ball}
d_{\mathbb{R}^{+}}(f(x),f(y))\leq  (n-1) d_{\mathbb{B}_{n}}(x,y), \quad x,y\in \mathbb{B}_{n},
\end{eqnarray}
where $d_{\mathbb{B}_{n}}$ is the hyperbolic metric on $\mathbb{B}_{n}$   given by $d_{\mathbb{B}_{n}}(x,y) =2\tanh^{-1}(|\varphi_{y}(x)|)$, and $\varphi_{y}(x)$ is the M\"{o}bius transformation of $\mathbb{B}_{n}$ defined  by (\ref{Mobius-transformation}).
\end{theorem}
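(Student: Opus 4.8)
The plan is to deduce the stated Lipschitz estimate from the pointwise bound
\[
|\nabla f(x)|\le\frac{2(n-1)}{1-|x|^2}\,f(x),\qquad x\in\mathbb{B}_n,\qquad(\ast)
\]
and, in turn, to obtain $(\ast)$ from the Poisson representation of $f$. For the first implication, recall that $d_{\mathbb{R}^+}(s,t)=|\log s-\log t|$, and that $d_{\mathbb{B}_n}$ coincides with the length metric of the conformal density $2/(1-|x|^2)$, i.e.
\[
d_{\mathbb{B}_n}(x,y)=\inf_{\gamma}\int_0^1\frac{2\,|\gamma'(t)|}{1-|\gamma(t)|^2}\,dt,
\]
the infimum running over smooth curves $\gamma\colon[0,1]\to\mathbb{B}_n$ with $\gamma(0)=x$, $\gamma(1)=y$. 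Since $f>0$, differentiating $\log(f\circ\gamma)$ gives
\[
|\log f(x)-\log f(y)|=\left|\int_0^1\frac{\nabla f(\gamma(t))\cdot\gamma'(t)}{f(\gamma(t))}\,dt\right|\le\int_0^1\frac{|\nabla f(\gamma(t))|}{f(\gamma(t))}\,|\gamma'(t)|\,dt\le(n-1)\int_0^1\frac{2\,|\gamma'(t)|}{1-|\gamma(t)|^2}\,dt,
\]
and taking the infimum over $\gamma$ yields $d_{\mathbb{R}^+}(f(x),f(y))\le(n-1)\,d_{\mathbb{B}_n}(x,y)$. Thus the theorem reduces to $(\ast)$.

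To prove $(\ast)$, I would invoke the classical integral representation of positive harmonic functions on the ball: there is a finite positive Borel measure $\mu$ on $S^{n-1}$ with $f(x)=\int_{S^{n-1}}P(x,\zeta)\,d\mu(\zeta)$, where $P(x,\zeta)=c_n(1-|x|^2)|x-\zeta|^{-n}$ with $c_n>0$ a dimensional constant. Differentiating under the integral sign, which is legitimate since $\mu$ is finite and $\nabla_xP(\cdot,\zeta)$ is bounded uniformly in $\zeta\in S^{n-1}$ on compact subsets of $\mathbb{B}_n$, and using the triangle inequality for vector-valued integrals, one gets $|\nabla f(x)|\le\int_{S^{n-1}}|\nabla_xP(x,\zeta)|\,d\mu(\zeta)$. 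Hence $(\ast)$ will follow once we prove the kernel inequality
\[
|\nabla_xP(x,\zeta)|\le\frac{2(n-1)}{1-|x|^2}\,P(x,\zeta),\qquad x\in\mathbb{B}_n,\ \zeta\in S^{n-1};
\]
integrating it against $d\mu$ and dividing by $f(x)=\int P\,d\mu>0$ finishes this part.

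The kernel inequality is the crux, and is equivalent to $|\nabla_x\log P(x,\zeta)|\le 2(n-1)/(1-|x|^2)$. A direct computation gives
\[
\nabla_x\log P(x,\zeta)=-\frac{2x}{1-|x|^2}-\frac{n(x-\zeta)}{|x-\zeta|^2}.
\]
Put $r=|x|$ and $s=|x-\zeta|$; since $|\zeta|=1$ one has $\langle x,\,x-\zeta\rangle=\tfrac12\bigl(s^2-(1-r^2)\bigr)$, and expanding the square yields
\[
\bigl|\nabla_x\log P(x,\zeta)\bigr|^2=\frac{4r^2}{(1-r^2)^2}+\frac{2n}{1-r^2}+\frac{n(n-2)}{s^2}.
\]
As $n\ge2$, the coefficient $n(n-2)$ is nonnegative, and the triangle inequality $1=|\zeta|\le|x-\zeta|+|x|$ gives $s\ge1-r$; replacing $s$ by $1-r$ and multiplying through by $(1-r^2)^2$, the desired estimate $\bigl|\nabla_x\log P\bigr|^2\le4(n-1)^2/(1-r^2)^2$ reduces to the one-variable polynomial inequality
\[
(n-2)^2r^2+2n(n-2)\,r+n^2\le4(n-1)^2,\qquad 0\le r\le1,
\]
whose left-hand side has nonnegative coefficients, so is nondecreasing in $r$, and equals $4(n-1)^2$ at $r=1$. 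This closes the argument. Moreover, tracing the equality cases shows the bound is sharp: taking $f=P(\cdot,\zeta)$ and $x,y$ on the radius through $\zeta$ with $|x|,|y|\to1$ forces $d_{\mathbb{R}^+}(f(x),f(y))/d_{\mathbb{B}_n}(x,y)$ to approach $n-1$.

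The only genuinely delicate point is the kernel inequality; once one decides to prove it pointwise in $\zeta$, it collapses to the elementary polynomial estimate above. The routine hazards are keeping the normalizations of the two hyperbolic metrics consistent throughout and justifying the differentiation under the integral sign.
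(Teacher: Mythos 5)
Your proof is correct, and it takes a genuinely more elementary route than the paper's main argument. You reduce at once to the pointwise bound $|\nabla\log f(x)|\le 2(n-1)/(1-|x|^2)$ and establish it by a direct computation of $|\nabla_x\log P(x,\zeta)|^2$, reducing via $|x-\zeta|\ge 1-|x|$ to the one-variable inequality $(n-2)^2r^2+2n(n-2)r+n^2\le 4(n-1)^2$, which is tight at $r=1$ since the left side equals $((n-2)+n)^2$. The paper instead first proves the sharper M\"obius-invariant estimate $\bigl||x|^2-1)\nabla f(x)+(n-2)xf(x)\bigr|\le n f(x)$ (their Theorem~\ref{Main-sharp-theorem}) by establishing it at the origin via the Herglotz representation and then transporting it to general $x$ with the Kelvin--M\"obius transform (Lemma~\ref{Hua-0}); integrating that estimate gives a factor $n/2$ together with an extra term $\bigl|\log\tfrac{1-|y|^2}{1-|x|^2}\bigr|$, which is then absorbed using the auxiliary inequality (\ref{Fact}). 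What the paper's longer route buys is the sharp invariant gradient estimate with a full characterization of extremizers (equation (\ref{extremal-function})), which is of independent interest; your shorter route yields only the (non-sharp) pointwise bound $(1-|x|^2)|\nabla f(x)|\le 2(n-1)f(x)$, which indeed follows from the paper's sharper inequality by the triangle inequality since $(1-|x|^2)|\nabla f(x)|\le nf(x)+(n-2)|x|f(x)<2(n-1)f(x)$. Your closing remark on sharpness --- taking $f=P(\cdot,\zeta)$ and points on the radius through $\zeta$ tending to the boundary --- is also correct, since then $\log f(t\zeta)=\log(1+t)-(n-1)\log(1-t)$ while $d_{\mathbb{B}_n}(t\zeta,0)=\log(1+t)-\log(1-t)$, giving ratio $\to n-1$.
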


The proof of Theorem \ref{Main-Ball} is built on the following estimate. Moreover, this estimate   is sharp.
\begin{theorem}\label{Main-sharp-theorem}
Let $n\geq2$ be integer and  $f:\mathbb{B}_{n} \rightarrow \mathbb{R}^{+}$   be a harmonic function. Then
 \begin{eqnarray} \label{Main-sharp}
|(|x|^{2}-1)\nabla f(x)+(n-2)xf(x)|\leq n f(x), \quad x\in \mathbb{B}_{n}.
\end{eqnarray}
If the equality in  (\ref{Main-sharp}) is attained  for some  $a \in \mathbb{B}_{n}$, then    there is  a point $\xi \in   \partial \mathbb{B}_{n}$ such that
  \begin{eqnarray} \label{extremal-function}
  f(x)=f(a)|1-\varphi_{a}(x)\overline{a}|^{n-2}P_{\xi}\circ \varphi_{a}(x), \quad x\in \mathbb{B}_{n},  \end{eqnarray}
where  $P_{\xi}$ is the Poisson kernel given by
 $$P_{\xi}(x)=P(x, \xi)=\frac{1-|x|^{2}}{|x-\xi|^{n}}.$$
 Moreover, every positive and harmonic function $f$ defined by (\ref{extremal-function}) satisfies  the equality in  (\ref{Main-sharp})  for all  $x \in \mathbb{B}_{n}$ and $\xi \in   \partial \mathbb{B}_{n}$.

\end{theorem}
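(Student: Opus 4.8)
The plan is to reduce to the value of the gradient at the origin by a conformal change of variables, establish the sharp pointwise bound there via the Poisson integral representation, and then track the equality case carefully. First I would recall that any positive harmonic function on $\mathbb{B}_n$ has a Herglotz/Poisson representation $f(x)=\int_{\partial\mathbb{B}_n}P(x,\zeta)\,d\mu(\zeta)$ for a positive Borel measure $\mu$ on the sphere. Differentiating under the integral sign at $x=0$ gives $f(0)=\int d\mu$ and $\nabla f(0)=\int \nabla_x P(x,\zeta)\big|_{x=0}\,d\mu(\zeta) = n\int \zeta\,d\mu(\zeta)$, since $\nabla_x P(x,\zeta)|_{x=0}=n\zeta$. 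Hence $|\nabla f(0)|\le n\int|\zeta|\,d\mu = n\,f(0)$, which is exactly \eqref{Main-sharp} at $x=0$ (where the left-hand side is $|-\nabla f(0)|=|\nabla f(0)|$). Equality forces $\mu$ to be a point mass $c\,\delta_\xi$ at a single $\xi\in\partial\mathbb{B}_n$, i.e.\ $f=f(0)P_\xi$.

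Next I would transfer this to an arbitrary $a\in\mathbb{B}_n$ using the Möbius transformation $\varphi_a$ of $\mathbb{B}_n$ from \eqref{Mobius-transformation}, which satisfies $\varphi_a(a)=0$, $\varphi_a\circ\varphi_a=\mathrm{id}$. The subtlety is that, unlike in the disc, harmonicity is \emph{not} preserved by $\varphi_a$ in dimension $n\ge3$; instead there is the classical Kelvin-type identity: if $f$ is harmonic on $\mathbb{B}_n$ then $g(x):=|1-\varphi_a(x)\bar a|^{\,?}\, f\circ\varphi_a(x)$ is harmonic, where the weight is the Jacobian factor $\big(\tfrac{1-|a|^2}{|x-a|^2}\big)$-type term raised to the appropriate power; concretely one uses that $P(\varphi_a(x),\zeta)$ equals $P(x,\varphi_a(\zeta))$ times the conformal factor, and that composing a harmonic function with $\varphi_a$ and multiplying by the suitable power $(n-2)$-type weight yields a harmonic function. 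I would then apply the $x=0$ estimate to this transformed harmonic function and unwind. Carrying out the chain rule on $|(|x|^2-1)\nabla f(x)+(n-2)xf(x)|$ and checking it is the conformally natural quantity — i.e.\ that the operator $f\mapsto (|x|^2-1)\nabla f+(n-2)xf$ has the right covariance under $\varphi_a$ so that its norm at $a$ equals (up to the correct positive factor) the norm of $\nabla(\text{transformed }f)$ at $0$ — is the computational heart of the argument and the step I expect to be the main obstacle.

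For the equality statement, once equality holds at $x=a$, the reduced inequality at the origin is an equality for the transformed function, so that transformed function is a constant multiple of a Poisson kernel $P_\xi$; inverting the change of variables gives precisely the form \eqref{extremal-function}, with $f(a)$ fixed by evaluating at $x=a$ (where $\varphi_a(a)=0$, $P_\xi(0)=1$, and the weight is $1$). Conversely, for $f$ of the form \eqref{extremal-function} I would simply verify by direct computation that $f$ is positive and harmonic (it is a conformal image of $f(a)P_\xi$, harmonic by the Kelvin identity, and positive since $P_\xi>0$) and that the differential identity \eqref{Main-sharp} holds with equality everywhere; by the covariance established above it suffices to check this for $f=P_\xi$ itself, where an explicit computation of $\nabla P_\xi$ and $|x|^2-1$ reduces both sides to a common expression — a routine but essential verification.
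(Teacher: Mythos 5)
Your proposal follows essentially the same route as the paper: establish the bound $|\nabla f(0)|\le n f(0)$ at the origin via the Poisson--Herglotz representation, transfer to general $a$ by the Kelvin--M\"obius transform $f[\varphi_a](x)=\bigl(\tfrac{1-|a|^2}{|1-x\bar a|^2}\bigr)^{n/2-1}f\circ\varphi_a(x)$ (the paper cites Hua's invariance of the conformal Laplacian for this), and then read off the equality case from the point-mass characterization at the origin, with a concluding direct verification for the extremal family. The computational step you flag as the likely obstacle --- that $\nabla\bigl(|1-x\bar a|^{-(n-2)}f\circ\varphi_a(x)\bigr)\big|_{x=0}=(|a|^2-1)\nabla f(a)+(n-2)a f(a)$, which shows your operator is exactly the conformally covariant quantity --- is precisely the short explicit calculation the paper carries out, so there is no gap in principle, only an unexecuted (but correct) step.
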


The natural question is to ask: what is the analogue  of Theorem  \ref{Main-Ball} for bounded harmonic functions $f:\mathbb{B}_{n} \rightarrow (-1,1)$?  Based on the  proved Khavinson conjecture in \cite{Liu}, Liu  very recently has given  an answer to this question by established the following  Schwarz-Pick type  inequality \cite[Theorem 1]{Liu2}, which can be viewed as a counterpart of  Theorem \ref{Main-sharp-theorem} for bounded harmonic functions.

\begin{theorem}\label{theorem-Liu}
Let $n\geq4$ be integer and   $f:\mathbb{B}_{n} \rightarrow (-1,1)$   be a harmonic function. Then
\begin{eqnarray}  \label{Liu}
|\nabla  f(x)|\leq \frac{|\mathbb{B}_{n-1}|}{|\mathbb{B}_{n}|}\frac{2}{1-|x|^{2}},\quad x\in \mathbb{B}_{n},
\end{eqnarray}
where $|\mathbb{B}_{n}|$  denotes the  volume of the unit ball $\mathbb{B}_{n}$. The equality in (\ref{Liu}) holds if and only
if $x=0$ and $f=U \circ T$ for some  $T\in O(n)$, where $U$ is the Poisson integral of the function that equals $1$ on a hemisphere and $-1$ on the remaining hemisphere and  $O(n)$ denotes the set of orthogonal transformations of $\mathbb{R}^{n}$.
\end{theorem}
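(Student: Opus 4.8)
The plan is to recast the sharp gradient bound as an extremal problem for Poisson integrals on $\partial\mathbb{B}_{n}$ and then feed in the resolved Khavinson conjecture of \cite{Liu}. A preliminary observation is that Theorem \ref{Main-sharp-theorem} by itself is not enough: applying (\ref{Main-sharp}) to the two positive harmonic functions $1-f$ and $1+f$ and adding the resulting inequalities through the triangle inequality yields $|(|x|^{2}-1)\nabla f(x)+(n-2)xf(x)|\le n$, hence the non-sharp bound $(1-|x|^{2})|\nabla f(x)|\le n+(n-2)|x|$, which at $x=0$ gives only $|\nabla f(0)|\le n$ instead of the sharp value $\frac{2|\mathbb{B}_{n-1}|}{|\mathbb{B}_{n}|}$. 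Getting the sharp constant forces one to work directly with the boundary data.

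\emph{Reduction.} Being bounded and harmonic, $f$ is the Poisson integral $f(x)=\int_{\partial\mathbb{B}_{n}}P(x,\zeta)f^{*}(\zeta)\,d\sigma(\zeta)$ of some $f^{*}\in L^{\infty}(\partial\mathbb{B}_{n})$ with $|f^{*}|\le1$ a.e., where $\sigma$ is normalized surface measure (standard, e.g.\ via weak-$*$ limits of $f(r\,\cdot\,)$ as $r\to1$). Differentiating under the integral sign, for a unit vector $\mathbf{e}$ one has $\partial_{\mathbf{e}}f(x)=\int_{\partial\mathbb{B}_{n}}\langle\nabla_{x}P(x,\zeta),\mathbf{e}\rangle f^{*}(\zeta)\,d\sigma(\zeta)\le\int_{\partial\mathbb{B}_{n}}|\langle\nabla_{x}P(x,\zeta),\mathbf{e}\rangle|\,d\sigma(\zeta)$, with equality precisely when $f^{*}=\operatorname{sgn}\langle\nabla_{x}P(x,\cdot),\mathbf{e}\rangle$ a.e. Moreover, writing out $\nabla_{x}P$ from $P(x,\zeta)=(1-|x|^{2})|x-\zeta|^{-n}$ and using $|\zeta|=1$, the defining equation of $\{\zeta\in\partial\mathbb{B}_{n}:\langle\nabla_{x}P(x,\zeta),\mathbf{e}\rangle=0\}$ turns out to be affine-linear in $\zeta$, so this set is the boundary great-subsphere of a spherical cap $C=C(x,\mathbf{e})$; since $\int_{\partial\mathbb{B}_{n}}\langle\nabla_{x}P(x,\zeta),\mathbf{e}\rangle\,d\sigma(\zeta)=\langle\nabla_{x}1,\mathbf{e}\rangle=0$, the positive and negative parts balance and $\int_{\partial\mathbb{B}_{n}}|\langle\nabla_{x}P(x,\zeta),\mathbf{e}\rangle|\,d\sigma(\zeta)=2\,\partial_{\mathbf{e}}u_{C}(x)$, where $u_{C}$ is the Poisson integral of $\mathbf{1}_{C}$. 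Altogether,
\[
|\nabla f(x)|\le\Lambda_{n}(x):=\sup_{|\mathbf{e}|=1}\int_{\partial\mathbb{B}_{n}}|\langle\nabla_{x}P(x,\zeta),\mathbf{e}\rangle|\,d\sigma(\zeta)=2\sup_{|\mathbf{e}|=1}\partial_{\mathbf{e}}u_{C(x,\mathbf{e})}(x),
\]
so everything reduces to bounding $\Lambda_{n}(x)$ and identifying its extremizers.

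\emph{The extremal value.} Here I would invoke the solution of the Khavinson conjecture (\cite{Liu}; the cases $n=2,3$ being known by other means, which explains the restriction to $n\ge4$): among admissible configurations the directional derivative is maximized by an axially symmetric one, i.e.\ the extremal direction is radial, $\mathbf{e}=x/|x|$, with the corresponding cap centered on that axis, so the extremal boundary datum depends only on $\langle\zeta,x/|x|\rangle$. This reduces $\Lambda_{n}(x)$ to a one-variable integral in the radial coordinate, and the remaining step is the estimate $\Lambda_{n}(x)\le\frac{2|\mathbb{B}_{n-1}|}{|\mathbb{B}_{n}|}\cdot\frac{1}{1-|x|^{2}}$ with \emph{strict} inequality for $x\ne0$ — equivalently, that $(1-|x|^{2})\Lambda_{n}(x)$ attains its maximum only at $x=0$; this is the delicate analytic point, and the place where $n\ge4$ is genuinely used. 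At $x=0$ equality is immediate: $\nabla_{x}P(0,\zeta)=n\zeta$, so for $C$ a hemisphere $\{\langle\zeta,\mathbf{e}\rangle>0\}$, $\partial_{\mathbf{e}}u_{C}(0)=n\int_{C}\langle\zeta,\mathbf{e}\rangle\,d\sigma(\zeta)=\tfrac{n}{2}\int_{\partial\mathbb{B}_{n}}|\langle\zeta,\mathbf{e}\rangle|\,d\sigma(\zeta)=\frac{|\mathbb{B}_{n-1}|}{|\mathbb{B}_{n}|}$, the last equality being the ``shadow'' identity $\int_{\partial\mathbb{B}_{n}}|\langle\zeta,\mathbf{e}\rangle|\,d\sigma(\zeta)=2|\mathbb{B}_{n-1}|/(n|\mathbb{B}_{n}|)$, obtained by projecting a hemisphere orthogonally onto its equatorial $(n-1)$-disc.

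\emph{Equality and the main obstacle.} If equality holds in (\ref{Liu}) for a given $f$ at a point $x$, then the strict inequality above forces $x=0$, and the equality case of the reduction forces $f^{*}=\operatorname{sgn}\langle\nabla_{x}P(0,\cdot),\mathbf{e}\rangle=\operatorname{sgn}\langle\zeta,\mathbf{e}\rangle$ for an extremal direction $\mathbf{e}$, i.e.\ $f^{*}$ equals $+1$ on the hemisphere $\{\langle\zeta,\mathbf{e}\rangle>0\}$ and $-1$ on the complementary one; choosing $T\in O(n)$ that sends a fixed pole to $\mathbf{e}$ gives $f=U\circ T$, and conversely this $f$ attains equality by the computation closing the previous paragraph. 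I expect the only genuinely hard ingredient to be the extremal-value step: the reduction to the radial, axially symmetric configuration is exactly the (deep) Khavinson conjecture, and then extracting the clean bound $\frac{2|\mathbb{B}_{n-1}|}{|\mathbb{B}_{n}|}\cdot\frac{1}{1-|x|^{2}}$ — together with its strictness away from the origin — from the genuinely sharp but unwieldy $\Lambda_{n}(x)$ is itself a nontrivial monotonicity estimate in $|x|$.
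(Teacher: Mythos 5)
The theorem you are asked to prove is not actually proved in this paper: it is Liu's result, cited verbatim as \cite[Theorem 1]{Liu2} and stated here only for context, so there is no proof in this source to compare against. Your proposal is therefore a reconstruction of Liu's argument, and as such it correctly identifies both the right reduction and the right difficulty.

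Your preliminary observations are sound. The remark that Theorem \ref{Main-sharp-theorem}, applied to $1\pm f$ and combined by the triangle inequality, only yields $(1-|x|^{2})|\nabla f(x)|\le n+(n-2)|x|$ (giving $|\nabla f(0)|\le n$ rather than the sharp $2|\mathbb{B}_{n-1}|/|\mathbb{B}_{n}|$) is correct and correctly motivates passing to boundary data. The Poisson-integral reduction to $\Lambda_{n}(x)=\sup_{|\mathbf{e}|=1}\int_{\partial\mathbb{B}_{n}}|\langle\nabla_{x}P(x,\zeta),\mathbf{e}\rangle|\,d\sigma(\zeta)$ is right; the zero set computation (the set $\{\langle\nabla_{x}P(x,\cdot),\mathbf{e}\rangle=0\}$ is the boundary circle of a cap, because the equation is affine in $\zeta$ after using $|\zeta|=1$) is right; the rewrite as $2\,\partial_{\mathbf{e}}u_{C}(x)$ using $\int\langle\nabla_{x}P,\mathbf{e}\rangle\,d\sigma=0$ is right; and the $x=0$ computation via the shadow identity $\int_{\partial\mathbb{B}_{n}}|\langle\zeta,\mathbf{e}\rangle|\,d\sigma=2|\mathbb{B}_{n-1}|/(n|\mathbb{B}_{n}|)$ giving $|\nabla f(0)|\le 2|\mathbb{B}_{n-1}|/|\mathbb{B}_{n}|$ is correct. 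The appeal to \cite{Liu} to reduce the supremum over $\mathbf{e}$ to the radial direction $\mathbf{e}=x/|x|$ is also the right ingredient and is genuinely where $n\ge4$ enters.

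However, as you yourself flag, the proposal stops short of the essential closing step: proving that
\[
(1-|x|^{2})\,\Lambda_{n}(x)\le\frac{2|\mathbb{B}_{n-1}|}{|\mathbb{B}_{n}|},\qquad\text{with strict inequality for }x\neq0.
\]
Once the Khavinson reduction is granted this is a one-dimensional problem (a weighted integral in the latitude variable depending on $r=|x|$), but it is \emph{not} a formal consequence of anything stated so far: nothing in your reduction or in Theorem \ref{Main-sharp-theorem} controls the $r$-dependence of the radial directional derivative of the cap Poisson integral, and the naive bound $(1-|x|^{2})|\nabla f(x)|\le n+(n-2)|x|$ is increasing in $|x|$ rather than bounded by the value at $0$. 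Establishing this monotonicity (and its strictness, which you need for the ``only if'' direction of the equality statement) is precisely the analytic core of \cite{Liu2}, and without it your outline does not constitute a proof. In short: the plan is the right plan, the easy parts are correctly executed, and the single step you defer is exactly the theorem.
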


Note that, for $n=2$,  (\ref{Liu}) can be obtained directly from (\ref{Kalaj-Vuorinen}). Curiously, for $n=3$,  (\ref{Liu}) should be replaced by
 $$|\nabla  f(x)|< \frac{8}{3\sqrt{3}}\frac{1}{1-|x|^{2}},\quad x\in \mathbb{B}_{3},$$
where $\frac{8}{3\sqrt{3}} (>2\frac{|\mathbb{B}_{2}|}{|\mathbb{B}_{3}|}=1.5)$ is the best possible; see \cite[Note]{Khavinson} and \cite[Remark 1]{Liu}.

\begin{remark}
Factually, (\ref{Liu}) at $x=0$ holds for all $n\geq2$ and the constant $2 |\mathbb{B}_{n-1}|/|\mathbb{B}_{n}|$  is optimal in this case; see \cite[Theorem 6.26]{Axler} or \cite[Corollary 2.2]{Kalaj}. Furthermore, the requirement that $f$ is real-valued   is crucial in the validity of (\ref{Liu}). In fact, (\ref{Liu}) fails even at $x=0$ for complex-valued harmonic functions \cite[p. 126]{Axler}.
In this paper, for  vector-valued harmonic functions $f:\mathbb{B}_{n}\rightarrow \mathbb{R}^{m}$, we find that (\ref{Liu}) still hold by using the matrix (operator) norm of the Jacobian matrix  $\nabla  f(x)\in \mathbb{R}^{m\times n}$,  that is the square root of the biggest eigenvalue of $(\nabla  f(x))^{T} \nabla  f(x)$.
\begin{theorem} \label{Theorem-Liu-vector}
Let  $f:\mathbb{B}_{n}\rightarrow \mathbb{B}_{m}$ be harmonic functions with $n=2,$ or $n\geq4$. Then
\begin{eqnarray}  \label{Liu-vector}
\|\nabla  f(x)\|\leq \frac{|\mathbb{B}_{n-1}|}{|\mathbb{B}_{n}|}\frac{2}{1-|x|^{2}},\quad x\in \mathbb{B}_{n},
\end{eqnarray}
where $\|\nabla  f(x)\|$  denotes the matrix norm of $\nabla  f(x)\in \mathbb{R}^{m\times n}$.
\end{theorem}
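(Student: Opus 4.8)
The plan is to deduce the vector-valued estimate (\ref{Liu-vector}) from the scalar results already recorded above --- inequality (\ref{Kalaj-Vuorinen}) of Kalaj--Vuorinen when $n=2$, and Theorem \ref{theorem-Liu} of Liu when $n\geq4$ --- by slicing the target with unit vectors. Fix $x\in\mathbb{B}_n$. For a unit vector $\eta\in\mathbb{R}^{m}$, put $g_{\eta}:=\langle\eta,f\rangle=\sum_{i=1}^{m}\eta_{i}f_{i}$. Then $g_{\eta}:\mathbb{B}_{n}\to\mathbb{R}$ is harmonic, and since $|g_{\eta}(x)|\leq|\eta|\,|f(x)|<1$ it maps $\mathbb{B}_{n}$ into $(-1,1)$. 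Writing the Jacobian $\nabla f(x)\in\mathbb{R}^{m\times n}$ with $(i,j)$ entry $\partial_{j}f_{i}(x)$, a direct computation gives $\nabla g_{\eta}(x)=(\nabla f(x))^{T}\eta$.

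First I would invoke the elementary identity $\|A\|=\|A^{T}\|=\sup_{|\eta|=1}|A^{T}\eta|$ for $A\in\mathbb{R}^{m\times n}$ (the operator norm of a matrix equals that of its transpose, since $\|A\|=\sup_{|v|=1,|w|=1}\langle Av,w\rangle=\sup_{|w|=1}|A^{T}w|$). Applied with $A=\nabla f(x)$, this yields
\begin{eqnarray*}
\|\nabla f(x)\|=\sup_{\eta\in\mathbb{R}^{m},\,|\eta|=1}\big|\nabla g_{\eta}(x)\big|,
\end{eqnarray*}
so it suffices to bound $|\nabla g_{\eta}(x)|$ by the right-hand side of (\ref{Liu-vector}) uniformly in $\eta$. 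For $n\geq4$, this is exactly Theorem \ref{theorem-Liu} applied to the real-valued harmonic function $g_{\eta}:\mathbb{B}_{n}\to(-1,1)$, and taking the supremum over $\eta$ gives (\ref{Liu-vector}). For $n=2$, I would apply (\ref{Kalaj-Vuorinen}) to $g_{\eta}:\mathbb{D}\to(-1,1)$ to get $|\nabla g_{\eta}(z)|\leq\frac{4}{\pi}\frac{1-|g_{\eta}(z)|^{2}}{1-|z|^{2}}\leq\frac{4}{\pi}\frac{1}{1-|z|^{2}}$, then use $\frac{4}{\pi}=2\,|\mathbb{B}_{1}|/|\mathbb{B}_{2}|$ (as $|\mathbb{B}_{1}|=2$ and $|\mathbb{B}_{2}|=\pi$), and again pass to the supremum over $\eta$.

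I do not expect a genuine analytic obstacle, since all the harmonic-function content is outsourced to (\ref{Kalaj-Vuorinen}) and Theorem \ref{theorem-Liu}; what remains is the projection trick together with the matrix/transpose norm identity. The two points needing care are the constant bookkeeping in the case $n=2$, and the reason the dimension $n=3$ must be excluded: there the sharp scalar constant $\frac{8}{3\sqrt{3}}$ strictly exceeds $2\,|\mathbb{B}_{2}|/|\mathbb{B}_{3}|$, so (\ref{Liu-vector}) would genuinely fail (just as (\ref{Liu}) does), and indeed the argument breaks precisely there because Theorem \ref{theorem-Liu} is not available for $n=3$.
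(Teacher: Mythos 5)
Your proof is correct and coincides with the paper's argument: project $f$ onto an arbitrary unit vector $\eta\in\mathbb{R}^{m}$ to get a scalar harmonic map $g_{\eta}:\mathbb{B}_n\to(-1,1)$, apply the scalar Schwarz--Pick bound (Theorem~\ref{theorem-Liu} for $n\geq4$, and (\ref{Kalaj-Vuorinen}) with $4/\pi=2|\mathbb{B}_1|/|\mathbb{B}_2|$ for $n=2$), and take the supremum over $\eta$ using $\|\nabla f(x)\|=\sup_{|\eta|=1}|(\nabla f(x))^{T}\eta|$. The paper handles the $n=2$ case implicitly via its earlier remark that (\ref{Liu}) follows from (\ref{Kalaj-Vuorinen}) when $n=2$; you simply made that bookkeeping explicit, which is a minor presentational difference rather than a different route.
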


We  restate (\ref{Liu-vector}) in the terms of the hyperbolic metric as follows. The proof is standard and omitted here.

\begin{theorem} \label{Liu-Hyperbolic}
Let  $f:\mathbb{B}_{n}\rightarrow \mathbb{B}_{m}$ be harmonic functions with $n=2,$ or $n\geq4$. Then
\begin{eqnarray}  \label{Liu-Hyperbolic}
|f(x)-f(y)| \leq \frac{|\mathbb{B}_{n-1}|}{|\mathbb{B}_{n}|}   d_{\mathbb{B}_{n}}(x,y), \quad x,y\in \mathbb{B}_{n}.
\end{eqnarray}
\end{theorem}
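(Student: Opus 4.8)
The plan is the standard one, omitted in the text above: integrate the pointwise gradient bound of Theorem~\ref{Theorem-Liu-vector} along the hyperbolic geodesic joining $x$ and $y$. Recall that $d_{\mathbb{B}_{n}}$ is the distance function of the Riemannian metric on $\mathbb{B}_{n}$ with conformal density $2(1-|z|^{2})^{-1}|dz|$; in particular, for $x,y\in\mathbb{B}_{n}$ one may choose a smooth curve $\gamma\colon[0,1]\to\mathbb{B}_{n}$ with $\gamma(0)=y$, $\gamma(1)=x$ and
$$\int_{0}^{1}\frac{2\,|\gamma'(t)|}{1-|\gamma(t)|^{2}}\,dt=d_{\mathbb{B}_{n}}(x,y).$$
Since a harmonic function is $C^{\infty}$, the composition $t\mapsto f(\gamma(t))$ is $C^{1}$ on $[0,1]$ with derivative $\nabla f(\gamma(t))\,\gamma'(t)$, where $\nabla f(\gamma(t))\in\mathbb{R}^{m\times n}$ acts on $\gamma'(t)\in\mathbb{R}^{n}$.

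Then I would apply the fundamental theorem of calculus together with the definition of the operator norm $\|\cdot\|$ on $\mathbb{R}^{m\times n}$, which gives $|\nabla f(\gamma(t))\,\gamma'(t)|\le\|\nabla f(\gamma(t))\|\,|\gamma'(t)|$, so that
$$|f(x)-f(y)|=\Big|\int_{0}^{1}\nabla f(\gamma(t))\,\gamma'(t)\,dt\Big|\le\int_{0}^{1}\big\|\nabla f(\gamma(t))\big\|\,|\gamma'(t)|\,dt.$$
Inserting the bound $\|\nabla f(z)\|\le\frac{|\mathbb{B}_{n-1}|}{|\mathbb{B}_{n}|}\,\frac{2}{1-|z|^{2}}$ of Theorem~\ref{Theorem-Liu-vector} (valid for all $z\in\mathbb{B}_{n}$ precisely because $n=2$ or $n\ge4$) and then using the displayed length identity yields
$$|f(x)-f(y)|\le\frac{|\mathbb{B}_{n-1}|}{|\mathbb{B}_{n}|}\int_{0}^{1}\frac{2\,|\gamma'(t)|}{1-|\gamma(t)|^{2}}\,dt=\frac{|\mathbb{B}_{n-1}|}{|\mathbb{B}_{n}|}\,d_{\mathbb{B}_{n}}(x,y),$$
which is the asserted inequality. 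One may just as well integrate along an arbitrary rectifiable arc from $y$ to $x$ and pass to the infimum over such arcs, so that only the description of $d_{\mathbb{B}_{n}}$ as the length metric of the density $2(1-|z|^{2})^{-1}|dz|$ is genuinely used.

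The single point that deserves a word is precisely that identification: that $d_{\mathbb{B}_{n}}(x,y)=2\tanh^{-1}(|\varphi_{y}(x)|)$ coincides with the geodesic distance of the density $2(1-|z|^{2})^{-1}|dz|$. This is classical and routine --- the density is invariant under the M\"{o}bius automorphisms $\varphi_{a}$ of $\mathbb{B}_{n}$, so one reduces to $y=0$, the diameter through $x$ is a geodesic by symmetry, and $\int_{0}^{|x|}2(1-r^{2})^{-1}\,dr=2\tanh^{-1}(|x|)$ --- which is why the whole argument can be called ``standard''. I therefore expect no real obstacle; the only mild difference from the scalar Schwarz--Pick computation is carrying the operator norm of the Jacobian through the estimate in place of the Euclidean length of a gradient.
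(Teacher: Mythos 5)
Your proof is correct and is exactly the ``standard'' argument the paper alludes to when it writes that the proof ``is standard and omitted here'': integrate the operator-norm gradient bound from Theorem~\ref{Theorem-Liu-vector} along a hyperbolic geodesic, using the description of $d_{\mathbb{B}_{n}}$ as the length metric for the density $2(1-|z|^{2})^{-1}|dz|$. Nothing to add.
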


\begin{remark}The distance in the left side of (\ref{Liu-Hyperbolic}) is Euclidean but not hyperbolic. Based on the inequality (\ref{Kalaj-Vuorinen}) and Theorem  \ref{Main-Ball}, one would conjecture a sharper version of (\ref{Liu}) that, for  harmonic functions $f:\mathbb{B}_{n}\rightarrow(-1,1)$ with $n\geq4$,
$$ \frac{|\nabla  f(x)|}{1-|f(x)|^{2}}\leq  \frac{|\mathbb{B}_{n-1}|}{|\mathbb{B}_{n}|}\frac{2}{1-|x|^{2}}, \quad x\in \mathbb{B}_{n}.$$
However, it is not the case as shown by a counter-example \cite[Theorem 2.1]{Khalfallah}.
\end{remark}

\begin{remark}Let  $f:\mathbb{B}_{n}\rightarrow \mathbb{R}^{m}$. When $m=1$, the  matrix norm concises with the  Euclidean norm of $\nabla  f(x)\in \mathbb{R}^{n}$, i.e., $\|\nabla  f(x) \|=|\nabla  f(x)|$.
 Furthermore, it holds that
 $$ |\triangledown |f| (x)| \leq  \|\nabla  f(x) \|, \quad x\in \mathbb{B}_{n},$$
 where $\triangledown |f| (x) =(\frac{\partial |f|}{\partial x_{1}},\frac{\partial |f|}{\partial x_{2}},\ldots\frac{\partial |f|}{\partial x_{n}})$ denotes the gradient of the  Euclidean norm  of $f(x)$.
\end{remark}

In the study of the  Schwarz-Pick inequality for  holomorphic functions, the quantity $|\triangledown |f||$ was first adopted by Pavlovi\'{c}  \cite{Pavlovic} due to that the  classical form in (\ref{Schwarz-Pick-der}) does not hold generally for vector-valued holomorphic functions. To obtain analogous form of (\ref{Schwarz-Pick-der}), Pavlovi\'{c} gave that, for holomorphic mappings $f=(f_{1}, \ldots, f_{n}): \mathbb{D} \rightarrow \mathbb{C}^{n}$ with  $|f|=(|f_{1}|^{2}+\cdots +|f_{n}|^{2})^{1/2}<1$,
\begin{equation} \label{Schwarz-pick-norm}
 |\triangledown |f| (z)|\leq \frac{1-|f(z)| ^{2}}{1-|z| ^{2}}, \quad z\in \mathbb{D}.
\end{equation}

Following the idea of Pavlovi\'{c},  Chen and Hamada established the  vector-valued version of the Khavinson conjecture for the norm of harmonic functions from  the Euclidean unit ball $\mathbb{B}_{n}$ into the unit ball of the real Minkowski space
by complicated calculations \cite{Chen-Hamada}.  By the same technique,  they  gave several sharp Schwarz-Pick type inequalities for pluriharmonic functions from the Euclidean unit ball (or  the unit polydisc) in   $\mathbb{C}^{n}$ into the unit ball of the Minkowski space.
Very recently,  using the technique of the present author  \cite{Xu2},  Chen et al.   have provided some improvements and generalizations of the corresponding results in  \cite{Chen-Hamada} into Banach spaces by  a relatively simple  proof \cite{Chen-Hamada-Ponnusamy}.
\end{remark}

As a large subclass of the  harmonic functions, the concept of monogenic functions appears in Clifford analysis, which is also a natural generalization of complex analysis into higher dimensions over non-commutative algebras.  For monogenic functions,   the Schwarz lemma does not hold at least in the original form, observed by Yang and Qian \cite[Remark 2]{Yang-Qian} and they established  a Schwarz lemma outside of the unit
ball in $\mathbb{R}^{n+1}$. Recently, some analogues of Schwarz  lemma  inside the unit ball were obtained in Clifford analysis \cite{Zhang14,Zhang16}, quaternionic analysis \cite{Gu,Wang-Sun-Bian} and octonionic analysis \cite{Wang-Bian-Liu}.  For example, by integral representations of harmonic functions  and M\"{o}bius transformations with Clifford coefficients,  Zhang established the following  Schwarz type lemma.

\begin{theorem}$($\cite[Theorem 3.2]{Zhang16}$)$ \label{Zhang}
Let  $f:\mathbb{B}_{n+1} \rightarrow \mathbb{R}_{0,n}$   be a Clifford algebra valued monogenic function with $|f(x)|\leq1,   x \in \mathbb{B}_{n+1}$. If $f(a)=0$ for some $a\in \mathbb{B}_{n+1}$, then
\begin{eqnarray}  \label{Zhang}
|f(x)| \leq \frac{(1+|a|)^{n}}{ \sqrt[n+1]{2}-1} \frac{|x-a|}{|1-\overline{a}x|^{n+1}}, \quad x \in \mathbb{B}_{n+1},
\end{eqnarray}
where $|\cdot |$ is the norm   and $\overline{ \cdot } $ is the  conjugate in $\mathbb{R}_{0,n}$.
\end{theorem}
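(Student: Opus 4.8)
The plan is to reduce to the centered case $a=0$ by exploiting the conformal covariance of the Dirac operator, and then to prove a Schwarz estimate at the origin by means of the Cauchy integral formula for monogenic functions. Concretely, let $\varphi_{a}(x)=(a-x)(1-\overline{a}x)^{-1}$ be the Clifford M\"obius transformation of $\mathbb{B}_{n+1}$ (products taken in $\mathbb{R}_{0,n}$); it is an involution, $\varphi_{a}\circ\varphi_{a}=\mathrm{id}$, with $\varphi_{a}(a)=0$ and $\varphi_{a}(0)=a$. By the classical M\"obius covariance of monogenic functions (the Kelvin-type transform intertwining the Dirac operator), the function
\[
h(y):=\frac{\overline{1-\overline{a}y}}{|1-\overline{a}y|^{n+1}}\,f\bigl(\varphi_{a}(y)\bigr),\qquad y\in\mathbb{B}_{n+1},
\]
is again monogenic, and $h(0)=f(\varphi_{a}(0))=f(a)=0$. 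Since $a,y$ are paravectors one has $|\overline{1-\overline{a}y}|=|1-\overline{a}y|$ and $|\overline{a}y|=|a||y|$, so $|1-\overline{a}y|\ge 1-|a|$ on $\mathbb{B}_{n+1}$ and $|h(y)|=|1-\overline{a}y|^{-n}|f(\varphi_{a}(y))|\le(1-|a|)^{-n}$; thus $\sup_{\mathbb{B}_{n+1}}|h|\le(1-|a|)^{-n}$.

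Next I would establish the auxiliary statement: if $g:\mathbb{B}_{n+1}\to\mathbb{R}_{0,n}$ is monogenic with $\sup_{\mathbb{B}_{n+1}}|g|=M$ and $g(0)=0$, then $|g(y)|\le\frac{|y|}{\sqrt[n+1]{2}-1}\,M$ for all $y\in\mathbb{B}_{n+1}$. Here the classical device behind the holomorphic Schwarz lemma---dividing out the vanishing factor---is unavailable, because $y^{-1}g(y)$ need not be monogenic; this is exactly why the constant degrades from $1$ to $1/(\sqrt[n+1]{2}-1)>1$. Instead I would combine the Cauchy integral formula for $g$ over a sphere $|\eta|=R<1$ with the identity $\int_{|\eta|=R}g(\eta)\,d\sigma(\eta)=0$, which is equivalent to $g(0)=0$ (using $\overline{\eta}\eta=R^{2}$ for paravectors $\eta$). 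Because this spherical mean vanishes, one may subtract a constant Clifford number $c$ from the Cauchy kernel and obtain, for $|y|=r<R$, an estimate of the shape $|g(y)|\le M R^{n}\sup_{|\eta|=R}|K(y,\eta)-c|$, where $K(y,\eta)$ has modulus $|\eta-y|^{-n}$. Optimizing $c$ against the oscillation of $K$ on the sphere, letting $R\to1$, and comparing the resulting convex increasing majorant (vanishing at $0$) with the trivial bound $|g|\le M$ gives the stated inequality; the bookkeeping that isolates $\sqrt[n+1]{2}-1$ is the one delicate point.

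Finally, applying the auxiliary statement to $g=h$ with $M=(1-|a|)^{-n}$ gives $|h(y)|\le|y|(1-|a|)^{-n}/(\sqrt[n+1]{2}-1)$. Writing $J(y)=\overline{1-\overline{a}y}\,|1-\overline{a}y|^{-(n+1)}$, one has $f(x)=J(\varphi_{a}(x))^{-1}h(\varphi_{a}(x))$ and $|f(x)|=|1-\overline{a}\varphi_{a}(x)|^{n}\,|h(\varphi_{a}(x))|$; using the standard M\"obius identities $|\varphi_{a}(x)|=|x-a|/|1-\overline{a}x|$ and $1-\overline{a}\varphi_{a}(x)=(1-|a|^{2})(1-\overline{a}x)^{-1}$ one obtains
\[
|f(x)|\le\Bigl(\frac{1-|a|^{2}}{|1-\overline{a}x|}\Bigr)^{n}\frac{1}{\sqrt[n+1]{2}-1}\,\frac{|x-a|}{|1-\overline{a}x|}\,\frac{1}{(1-|a|)^{n}}
=\frac{(1+|a|)^{n}}{\sqrt[n+1]{2}-1}\,\frac{|x-a|}{|1-\overline{a}x|^{n+1}},
\]
since $(1-|a|^{2})^{n}=(1-|a|)^{n}(1+|a|)^{n}$; this is the asserted bound~(\ref{Zhang}). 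The main obstacle is the auxiliary statement: the holomorphic ``divide-out the zero'' argument must be replaced by a genuinely Clifford-analytic one---harmonicity of $f$ alone being too weak, since a bounded harmonic function vanishing at $a$ can be comparatively large nearby---so the Cauchy representation special to monogenic functions is essential, together with the (non-sharp) choice of the subtracted constant that produces the clean value $\sqrt[n+1]{2}-1$. It seems plausible that the harmonic-function machinery of the present paper---in particular the M\"obius-weighted Poisson comparison functions $f(a)|1-\varphi_{a}(x)\overline{a}|^{n-2}P_{\xi}\circ\varphi_{a}(x)$ from Theorem~\ref{Main-sharp-theorem}---can replace this auxiliary estimate and sharpen the constant.
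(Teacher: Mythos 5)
The statement you are proving is \emph{not} proved in this paper: it is cited verbatim from \cite[Theorem 3.2]{Zhang16} (note the citation tag built into the theorem header), and the paper then goes on to prove only the strictly stronger Theorem \ref{Theorem-Zhang-improved}, where the denominator exponent drops from $n+1$ to $n$, the prefactor from $(1+|a|)^{n}$ to $(1+|a|)^{n-1}$, and ``monogenic'' is weakened to ``harmonic.''

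That said, your reconstruction of Zhang's argument is the natural one, and it is structurally the same reduction the present authors use for their improvement: conjugate by $\varphi_{a}$ via a Kelvin--M\"obius weight to reduce to the centered case, invoke the Schwarz-at-origin estimate $|g(y)|\le |y|/(\sqrt[n+1]{2}-1)\cdot\sup|g|$ for $g$ vanishing at $0$, then undo the M\"obius map with the standard identities $|\varphi_{a}(x)|=|x-a|/|1-\overline{a}x|$ and $1-\overline{a}\varphi_{a}(x)=(1-|a|^{2})(1-\overline{a}x)^{-1}$. The only substantive difference is \emph{which} Kelvin weight one uses, and this is exactly where the paper's improvement comes from. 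You take the monogenic covariance weight $\overline{1-\overline{a}y}\,|1-\overline{a}y|^{-(n+1)}$, of modulus $|1-\overline{a}y|^{-n}$, which preserves monogenicity and yields $\sup|h|\le(1-|a|)^{-n}$; the paper instead takes the harmonic Kelvin--M\"obius weight of Lemma \ref{Hua-0} on $\mathbb{B}_{n+1}$, namely $\bigl((1-|a|^{2})/|1-x\overline{a}|^{2}\bigr)^{(n+1)/2-1}$, of modulus $|1-x\overline{a}|^{-(n-1)}$, which preserves harmonicity (hence covers the larger class of harmonic $f$, since every monogenic function is harmonic) and gives the better bound $\sup|g|\le(1-|a|)^{-(n-1)}$. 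The one-degree saving in the weight exponent propagates to the one-degree saving in both $|1-\overline{a}x|$ and $(1+|a|)$. So your closing speculation that the paper's ``harmonic-function machinery'' improves matters is correct in spirit, but the gain is not a sharpening of the constant $1/(\sqrt[n+1]{2}-1)$ (both proofs take the centered estimate as a black box from \cite[Theorem 3.1]{Zhang16}); it is a power improvement owed to the cheaper harmonic Kelvin weight.

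Two small cautions: the identity $|h(y)|=|1-\overline{a}y|^{-n}\,|f(\varphi_{a}(y))|$ silently uses that left-multiplication by $\overline{1-\overline{a}y}$ is norm-preserving against the general Clifford-valued factor $f(\varphi_{a}(y))$; this is not automatic in $\mathbb{R}_{0,n}$ for $n\ge3$ and needs the observation that $1-\overline{a}y$ is a similitude when $a,y$ are paravectors (in the paper's version this issue vanishes because the harmonic Kelvin weight is a real scalar). And your Cauchy-kernel sketch of the centered Schwarz lemma is genuinely incomplete---as you yourself flag, isolating $\sqrt[n+1]{2}-1$ is the entire content of that auxiliary lemma, and neither the paper nor you actually prove it here.
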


By the same  technique as in the prove of  Theorem  \ref{Main-Ball},   we shall offer  a unify method to establish Schwarz type inequalities for harmonic functions in Clifford analysis and octonionic analysis as follows, instead of monogenic functions.

\begin{theorem}\label{Theorem-Zhang-improved}
Let  $f:\mathbb{B}_{n+1} \rightarrow \mathbb{R}_{0,n}$   be a  Clifford algebra valued  harmonic function with $|f(x)|\leq1,  x \in \mathbb{B}_{n+1}$. If $f(a)=0$ for some $a\in \mathbb{B}_{n+1}$, then
\begin{eqnarray}  \label{Zhang-improved}
|f(x)| \leq \frac{(1+|a|)^{n-1}}{ \sqrt[n+1]{2}-1} \frac{|x-a|}{|1-\overline{a}x|^{n}}, \quad x \in \mathbb{B}_{n+1},
\end{eqnarray}
and as a corollary
$$|\nabla f(a)| \leq \frac{1}{ \sqrt[n+1]{2}-1} \frac{1}{(1+|a|)(1- |a|)^{n}}.$$
\end{theorem}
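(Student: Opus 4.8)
The plan is to run the Poisson–integral argument behind Theorem~\ref{Main-sharp-theorem} directly, never invoking monogenicity; the Clifford structure will enter only through finite–dimensionality and the norm identity for $|1-\overline{a}x|$. For $0<s<1$ set $f_{s}(x)=f(sx)$, which is harmonic on a neighbourhood of $\overline{\mathbb{B}_{n+1}}$, so $f_{s}(x)=\int_{\partial\mathbb{B}_{n+1}}P(x,\xi)f(s\xi)\,d\sigma(\xi)$ with $|f(s\xi)|\le 1$ and $d\sigma$ the normalized surface measure. Because $f(a)=0$, for every real $\lambda$ one has $f_{s}(x)-\lambda f_{s}(a)=\int_{\partial\mathbb{B}_{n+1}}\bigl(P(x,\xi)-\lambda P(a,\xi)\bigr)f(s\xi)\,d\sigma(\xi)$, hence $|f_{s}(x)-\lambda f_{s}(a)|\le\int|P(x,\xi)-\lambda P(a,\xi)|\,d\sigma(\xi)$. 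Choosing $\lambda=m(x,a):=\min_{\xi\in\partial\mathbb{B}_{n+1}}P(x,\xi)/P(a,\xi)\ (\ge 0)$ makes the kernel nonnegative, so, using $\int P(x,\xi)\,d\sigma(\xi)=1$ and then letting $s\uparrow 1$ (with $f_{s}(a)\to f(a)=0$), we obtain the basic estimate
\[
|f(x)|\le 1-m(x,a),\qquad x\in\mathbb{B}_{n+1}.
\]

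The next step is a lower bound for $m(x,a)$. From the classical one–point Harnack inequality in $\mathbb{B}_{n+1}$, transported by the conformal change of variable $y\mapsto\varphi_{a}(y)$ — under which harmonicity of a function is preserved up to the positive weight $\bigl(\tfrac{1-|a|^{2}}{|1-\overline{a}y|^{2}}\bigr)^{(n-1)/2}$ — every positive harmonic $v$ on $\mathbb{B}_{n+1}$ satisfies $v(x)\ge v(a)\bigl(\tfrac{1-|a|^{2}}{|1-\overline{a}x|}\bigr)^{n-1}\tfrac{1-|\varphi_{a}(x)|}{(1+|\varphi_{a}(x)|)^{n}}$; applying this with $v=P(\cdot,\xi)$ for each $\xi$ yields the same lower bound for $m(x,a)$. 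I then use the elementary estimate
\[
|1-\overline{a}x|\ \le\ \frac{1-|a|^{2}}{1-|a|\,|\varphi_{a}(x)|},\qquad a,x\in\mathbb{B}_{n+1},\qquad(\star)
\]
which, after squaring and simplifying by the standard identity $|1-\overline{a}x|^{2}-|x-a|^{2}=(1-|a|^{2})(1-|x|^{2})$, reduces to $\bigl|\,|x-a|-|a|\,\bigr|\le|x|$, i.e. the triangle inequality in $\mathbb{R}^{n+1}$. Since $n-1\ge 0$, $(\star)$ upgrades the Harnack bound to
\[
m(x,a)\ \ge\ \bigl(1-|a|\,|\varphi_{a}(x)|\bigr)^{n-1}\frac{1-|\varphi_{a}(x)|}{(1+|\varphi_{a}(x)|)^{n}}.
\]

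It remains to deduce (\ref{Zhang-improved}) from a two–variable inequality in $r:=|a|\in[0,1)$ and $t:=|\varphi_{a}(x)|=|x-a|/|1-\overline{a}x|\in[0,1)$. Writing $|x-a|=t\,|1-\overline{a}x|$ and applying $(\star)$ once more, the right–hand side of (\ref{Zhang-improved}) is $\ge\dfrac{t\,(1-rt)^{n-1}}{(\sqrt[n+1]{2}-1)(1-r)^{n-1}}$; combined with $|f(x)|\le 1-m(x,a)\le 1-(1-rt)^{n-1}\tfrac{1-t}{(1+t)^{n}}$, the theorem follows once we know
\[
\frac{1}{(1-rt)^{n-1}}-\frac{1-t}{(1+t)^{n}}\ \le\ \frac{t}{(\sqrt[n+1]{2}-1)\,(1-r)^{n-1}}.
\]
A short derivative computation shows that (right side) $-$ (left side) is nondecreasing in $r$ on $[0,1)$ (and, for $n=1$, is constant in $r$), so it suffices to take $r=0$, i.e. to check $\psi(t):=t-(\sqrt[n+1]{2}-1)\bigl(1-\tfrac{1-t}{(1+t)^{n}}\bigr)\ge 0$ for $t\in[0,1]$. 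Here $\psi(0)=0$ and $\tfrac{d}{dt}\bigl(1-\tfrac{1-t}{(1+t)^{n}}\bigr)=\tfrac{(n+1)-(n-1)t}{(1+t)^{n+1}}\le n+1$, so $\psi'(t)\ge 1-(n+1)(\sqrt[n+1]{2}-1)\ge 0$, the last inequality being $\bigl(1+\tfrac{1}{n+1}\bigr)^{n+1}\ge 2$. Finally, letting $x\to a$ in (\ref{Zhang-improved}), dividing by $|x-a|$, and using $|1-\overline{a}x|\to 1-|a|^{2}$ turns the right–hand side into $\tfrac{(1+|a|)^{n-1}}{(\sqrt[n+1]{2}-1)(1-|a|^{2})^{n}}=\tfrac{1}{(\sqrt[n+1]{2}-1)(1+|a|)(1-|a|)^{n}}$, giving the corollary.

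The step I expect to be the main obstacle is making precise the sharp Harnack quantity $m(x,a)$ and carrying the Clifford–valued factor $|1-\overline{a}x|$ correctly through the estimates (in particular its interaction with $\varphi_{a}$ via $(\star)$); everything after that is one–variable calculus. It is worth stressing that the Clifford algebra itself is inessential: the argument uses only that $\mathbb{R}_{0,n}$ is a finite–dimensional real normed space, that $f$ is componentwise harmonic with $|f|\le 1$, and the identity $|1-\overline{a}x|^{2}=1-2\langle a,x\rangle+|a|^{2}|x|^{2}$. Hence the very same proof yields the octonionic analogue of \cite{Wang-Bian-Liu} and recovers, with the improved exponents, the Clifford inequality of \cite{Zhang16}.
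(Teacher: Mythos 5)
Your argument is correct and genuinely different from the paper's. The paper proves the theorem in a few lines: it cites Zhang's base estimate $|f(x)|\le\frac{|x|}{\sqrt[n+1]{2}-1}$ for the case $f(0)=0$, applies the Kelvin--M\"obius transform of Lemma~\ref{Hua-0} (i.e.\ precomposes with $\varphi_{a}$ and inserts the harmonicity-preserving weight $\bigl(\tfrac{1-|a|^{2}}{|1-x\overline{a}|^{2}}\bigr)^{(n-1)/2}$), normalizes by $\bigl(\tfrac{1-|a|}{1+|a|}\bigr)^{(n-1)/2}$ so that the transformed function is again bounded by $1$, and then uses the identity $|1-x\overline{a}||1-y\overline{a}|=1-|a|^{2}$ to transport the base estimate back. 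The gain in exponent over \cite{Zhang16} comes entirely from the scalar weight attached to the transform being smaller than the Jacobian factor Zhang is forced to use for monogenicity. Your proof instead re-derives the base case from scratch by the Poisson kernel, subtracting the optimal multiple $m(x,a)=\min_{\xi}P(x,\xi)/P(a,\xi)$ of $P(a,\xi)$ to exploit $f(a)=0$, then bounding $m(x,a)$ from below via the Harnack inequality transported through the same Kelvin--M\"obius transform, and finishing with a two-variable calculus inequality. I checked the key steps: $(\star)$ squares and cancels, via $|1-\overline{a}x|^{2}-|x-a|^{2}=(1-|a|^{2})(1-|x|^{2})$, to the triangle inequality $\bigl||x-a|-|a|\bigr|\le|x|$; the transported Harnack bound agrees with what Lemma~\ref{Hua-0} (in dimension $n+1$, weight exponent $\tfrac{n-1}{2}$) yields; and the monotonicity of $\Phi(r,t)$ in $r$ and the inequality $\psi\ge 0$ both hold. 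Your route is longer but is self-contained, rederiving $|f(x)|\le 1-\tfrac{1-|x|}{(1+|x|)^{n}}\le\tfrac{|x|}{\sqrt[n+1]{2}-1}$ rather than citing it, and it makes transparent that only finite-dimensionality, componentwise harmonicity, $|f|\le 1$, and $|1-\overline{a}x|^{2}=1-2\langle a,x\rangle+|a|^{2}|x|^{2}$ are used — so, as you note, it applies verbatim to the octonionic Theorem~\ref{Wang}, exactly as the paper's proof does. The paper's route is shorter and cleanly isolates the single transplantation step; yours exposes where the constant $\sqrt[n+1]{2}-1$ comes from. One small caveat, shared with the paper: the passage from the pointwise bound to $|\nabla f(a)|$ only directly controls $\limsup_{x\to a}|f(x)|/|x-a|$, i.e.\ the operator norm of the Jacobian of $f$ at $a$, and the meaning of $|\nabla f(a)|$ for Clifford-valued $f$ is left unspecified in both treatments.
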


\begin{theorem}\label{Wang}
Let  $f:\mathbb{B}_{8} \rightarrow \mathbb{O}$   be  an octonion valued  harmonic function with   $|f(x)|\leq1,  x \in \mathbb{B}_{8}$. If $f(a)=0$ for some $a\in \mathbb{B}_{8}$, then
$$|f(x)| \leq \frac{(1+|a|)^{6}}{ \sqrt[8]{2}-1} \frac{|x-a|}{|1-\overline{a}x|^{7}}, \quad x \in \mathbb{B}_{8}.$$
where $|\cdot |$ is the norm  and $\overline{ \cdot } $ is the  conjugate in  $\mathbb{O}$.
\end{theorem}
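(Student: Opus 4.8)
The plan is to move the zero of $f$ to the origin by a M\"obius change of variables that preserves harmonicity up to the usual conformal factor, settle the resulting $a=0$ case by a Poisson–kernel (mean–value) estimate, and then transport the bound back; non‑associativity of $\mathbb{O}$ will be felt only through the norm and the conjugation, never through a product that would have to be re‑bracketed.

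\emph{The case $a=0$.} Here $f(0)=0$. Each of the eight real coordinate functions of $f$ is a bounded harmonic function on $\mathbb{B}_8$, hence the Poisson integral of its a.e.\ boundary values; assembling them, $f(x)=\int_{\partial\mathbb{B}_8}P(x,\zeta)f^{*}(\zeta)\,d\sigma(\zeta)$ with $|f^{*}|\le 1$ a.e., $d\sigma$ the normalized surface measure and $P(x,\zeta)=(1-|x|^{2})|x-\zeta|^{-8}$. Since $\int_{\partial\mathbb{B}_8}P(0,\zeta)\,d\sigma=1$ and $f(0)=0$, for every $c\in\mathbb{R}$ we may subtract $c\,f(0)=c\int P(0,\zeta)f^{*}(\zeta)\,d\sigma$, so that $f(x)=\int_{\partial\mathbb{B}_8}(P(x,\zeta)-c)\,f^{*}(\zeta)\,d\sigma(\zeta)$. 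Choosing $c=\min_{\zeta}P(x,\zeta)=\frac{1-|x|}{(1+|x|)^{7}}\ge 0$ makes the real factor nonnegative, so by $|s\omega|=|s|\,|\omega|$ ($s\in\mathbb{R}$, $\omega\in\mathbb{O}$) and the triangle inequality for $\mathbb{R}^{8}$‑valued integrals,
$$|f(x)|\le\int_{\partial\mathbb{B}_8}\bigl(P(x,\zeta)-c\bigr)\,d\sigma=1-\frac{1-|x|}{(1+|x|)^{7}}.$$
A one–variable inequality (one checks $\tfrac{1-t}{(1+t)^{7}}\ge 1-8t$ on $[0,1)$ and $8\le(\sqrt[8]{2}-1)^{-1}$) upgrades this to $|f(x)|\le\frac{|x|}{\sqrt[8]{2}-1}$, which is Theorem~\ref{Wang} when $a=0$.

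\emph{Reduction to $a=0$.} Let $N=8$ and recall the conformal covariance of the Laplacian in $\mathbb{R}^{N}$, $N\ge 3$: if $u$ is harmonic and $\varphi$ is M\"obius with isotropic conformal factor $\lambda$, then $\lambda^{(N-2)/2}(u\circ\varphi)$ is harmonic. The M\"obius transformation $\varphi_{a}$ of $\mathbb{B}_8$ satisfies $|\varphi_{a}(x)-\varphi_{a}(y)|=\frac{(1-|a|^{2})\,|x-y|}{[a,x]\,[a,y]}$, where $[a,x]:=(1-2\langle a,x\rangle+|a|^{2}|x|^{2})^{1/2}=|1-\overline{a}x|$, so its conformal factor is $\lambda(x)=\frac{1-|a|^{2}}{[a,x]^{2}}$ and $(N-2)/2=3$. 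Applying the covariance to each real component of $f$, the function
$$g(x):=\left(\frac{1-|a|^{2}}{[a,x]^{2}}\right)^{3}f(\varphi_{a}(x))$$
is octonion‑valued harmonic on $\mathbb{B}_8$ with $g(0)=(1-|a|^{2})^{3}f(a)=0$. The quadratic $[a,x]^{2}$ has unconstrained minimum at $a/|a|^{2}\notin\mathbb{B}_8$, so its infimum over $\overline{\mathbb{B}_8}$ is attained on the sphere and equals $(1-|a|)^{2}$; hence $|g(x)|\le\left(\frac{1-|a|^{2}}{(1-|a|)^{2}}\right)^{3}=\left(\frac{1+|a|}{1-|a|}\right)^{3}=:R$, and applying the $a=0$ case to $g/R$ gives $|g(x)|\le\frac{R\,|x|}{\sqrt[8]{2}-1}$. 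Now put $x=\varphi_{a}(y)$ and use $\varphi_{a}\circ\varphi_{a}=\mathrm{id}$, $[a,\varphi_{a}(y)]=\frac{1-|a|^{2}}{[a,y]}$ and $|\varphi_{a}(y)|=\frac{|y-a|}{[a,y]}$ to obtain $f(y)=\left(\frac{1-|a|^{2}}{[a,y]^{2}}\right)^{3}g(\varphi_{a}(y))$, so
$$|f(y)|\le\left(\frac{1-|a|^{2}}{[a,y]^{2}}\right)^{3}\frac{R}{\sqrt[8]{2}-1}\frac{|y-a|}{[a,y]}=\frac{R(1-|a|^{2})^{3}}{\sqrt[8]{2}-1}\cdot\frac{|y-a|}{[a,y]^{7}};$$
since $R(1-|a|^{2})^{3}=\bigl(\tfrac{1+|a|}{1-|a|}\bigr)^{3}(1-|a|)^{3}(1+|a|)^{3}=(1+|a|)^{6}$ and $[a,y]=|1-\overline{a}y|$, this is exactly the bound of Theorem~\ref{Wang}.

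\emph{The main point.} As this is meant to go "by the same technique" as Theorem~\ref{Main-Ball}, the only genuinely octonionic issue — and the place that needs care — is verifying that associativity is never invoked. Every octonionic product above is either a real scalar times an octonion (the Poisson kernel, the constant $c$, the conformal factor $\lambda^{3}$, the normalization $1/R$: all central, hence harmless) or enters only through the norm, where one uses the composition‑algebra identities $|\omega_{1}\omega_{2}|=|\omega_{1}|\,|\omega_{2}|$ and $\overline{\omega_{1}\omega_{2}}=\overline{\omega_{2}}\,\overline{\omega_{1}}$ — in particular $\overline{a}x\cdot\overline{\overline{a}x}=|\overline{a}x|^{2}=|a|^{2}|x|^{2}$, which gives $|1-\overline{a}x|=[a,x]$ — all valid in $\mathbb{O}$. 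The analytic ingredients (Poisson representation of bounded harmonic functions, conformal covariance of $\Delta$ on $\mathbb{R}^{8}$) are applied coordinatewise and are untouched. So the remaining labor is just the elementary estimate for $\frac{1-t}{(1+t)^{7}}$ in the base case and the bookkeeping that shows $R$ recombines with the conformal factors to produce precisely $(1+|a|)^{6}$; the identical scheme with $8\mapsto n+1$ and the exponent $3\mapsto(n-1)/2$ proves Theorem~\ref{Theorem-Zhang-improved}, which is why the two are proved together.
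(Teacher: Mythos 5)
Your proof is correct, and the reduction to the case $a=0$ via the weighted composition $g(x)=\bigl(\tfrac{1-|a|^2}{|1-x\overline a|^2}\bigr)^{3}f\circ\varphi_a(x)$, normalized by the supremum $R=\bigl(\tfrac{1+|a|}{1-|a|}\bigr)^{3}$ of the conformal factor, is exactly the device the paper uses (the paper's auxiliary function $\bigl(\tfrac{1-|a|}{|1-x\overline a|}\bigr)^{n-1}f\circ\varphi_a$ is literally your $g/R$ with $n=7$). The one place you genuinely depart from the paper is the base case: the paper simply cites \cite[Theorem 3.1]{Zhang16}, which is stated for Clifford-algebra-valued functions, and then remarks that the octonionic theorem follows by a ``completely similar'' argument; you instead give a self-contained Poisson-kernel proof (subtracting $c=\min_\zeta P(x,\zeta)=\tfrac{1-|x|}{(1+|x|)^{7}}$ from the kernel, using $f(0)=0$ to insert this constant, and then the elementary inequality $1-\tfrac{1-t}{(1+t)^{7}}\le 8t\le\tfrac{t}{\sqrt[8]{2}-1}$). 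This has two advantages: it makes the octonionic case honestly covered rather than covered by analogy, and it isolates precisely where the algebra of $\mathbb{O}$ is used --- only in the composition-algebra identities $|\omega_1\omega_2|=|\omega_1||\omega_2|$ and $|1-\overline a x|^{2}=1-2\langle a,x\rangle+|a|^2|x|^2$, never through a re-associated triple product. Your intermediate bound $|f(x)|\le 8|x|$ is in fact stronger than the stated $\tfrac{|x|}{\sqrt[8]{2}-1}\approx 11.05|x|$; you correctly weaken it only at the end to match the theorem.
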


 Note that  $|1-\overline{a}x|<1+|a|$ for $a,x \in \mathbb{B}_{n+1}$.  Hence,   in a broader function class being harmonic, the obtained results in Theorems \ref{Theorem-Zhang-improved} and \ref{Wang} are   essential improvements of  monogenic versions in \cite[Theorem 4.8]{Zhang14}, \cite[Theorem 3.2]{Zhang16} and \cite[Theorem 4]{Wang-Bian-Liu}, respectively.

The remaining part of the paper is organized as follows. The next section  shall recall preliminaries on Clifford algebras and  use it to rewrite  some known properties of M\"{o}bius transformations of $\mathbb{B}_{n}$, which shall be used in the proof of main results. The section $3$ is devoted to the proof of Theorems \ref{Main-Ball}, \ref{Main-sharp-theorem}  and \ref{Theorem-Liu-vector}. In section $4$, we recall the concepts of monogenic functions  in Clifford analysis and octonionic analysis  and show that they are subclasses of harmonic functions. Finally, we give the proof of   Theorems \ref{Theorem-Zhang-improved} and \ref{Wang}.

\section{Preliminaries}
In this section, we first  recall preliminaries on Clifford algebras; see e.g. \cite{Gurlebeck}.

Denote by $\mathbb{R}_{0,n}$ the real Clifford algebra  over imaginary units  $\{e_1,e_2,\ldots,e_n\}$ which satisfy
$$e_ie_j+e_je_i=-2\delta_{ij}e_{0}, \quad  1\leq i\leq j\leq n,$$
where $e_{0}$  is identify with $1$, $\delta_{ij}$ is Kronecker function.

Each element $a\in \mathbb{R}_{0,n}$  has the form of
$$a=\sum_{A}a_{A}e_{A},\quad a_A \in \mathbb{R},$$
where  $A = h_{1}h_{2} \ldots h_{r}$ with $ 1\leq h_{1}<h_{2}<\ldots<h_{r}\leq n, e_{A}=e_{h_{1}}e_{h_{2}}\ldots e_{h_{r}}$ and $e_{\emptyset}=e_{0}=1$.   The \textit{real} part of $a\in \mathbb{R}_{0,n}$ is ${\rm{Re}}\, a=a_{\emptyset}=a_0$.
The norm of  $a$ is defined by $|a|= ({\sum_{A}|a_{A}|^{2}} )^{\frac{1}{2}}.$ As a real  vector space, the dimension of Clifford algebra $\mathbb{R}_{0,n}$  is $2^{n}$. The paravector $x$ in $\mathbb{R}_{0,n}$ is given by $$x = \sum _{i=0}^{n} x_{i}e_{i},  \quad x_{i}\in \mathbb{R}.$$ Hence, the space $\mathbb R^{n+1}$ can be identified as the set of all  paravector  in Clifford algebra $\mathbb{R}_{0,n}$.
For paravector $x\neq0$, it  inverse is given by
$$x^{-1}=  \frac{\overline{x}}{|x|^{2}}, $$
where $\overline{x}$ denotes the  conjugate of $x$, that is $\overline{x}= \sum _{i=0}^{n} x_{i}\overline{e_{i}}
= x_{0}-\sum _{i=1}^{n} x_{i}{e_{i}}$.
 Note that Clifford algebra is   associative and non-commutative, but not divisible generally.  The equality $|ab|=|a| |b|$ does not  hold generally for   $a, b \in \mathbb{R}_{0,n}$ when  $n\geq 3$. However,  it is  holds for in the following special case in  Clifford algebras (see \cite[Theorem 3.14]{Gurlebeck}).
\begin{lemma}\label{modulus}
Let $a\in \mathbb{R}_{0,n} $ and $x\in \mathbb R^{n+1}$. Then
$$|ax|=|xa|=|a||x|.$$
 \end{lemma}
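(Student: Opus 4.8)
The final statement is Lemma~\ref{modulus}, asserting $|ax|=|xa|=|a||x|$ for $a\in\mathbb{R}_{0,n}$ and a paravector $x\in\mathbb{R}^{n+1}$. Although the excerpt cites \cite[Theorem 3.14]{Gurlebeck}, a self-contained proof is short, and the plan is to reduce everything to the behaviour of the conjugation anti-automorphism on paravectors.

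\smallskip

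The plan is as follows. First I would recall that the main anti-involution $\overline{\phantom{a}}$ on $\mathbb{R}_{0,n}$ (the one that fixes $e_0$, sends each $e_i\mapsto -e_i$, and reverses products, $\overline{uv}=\overline{v}\,\overline{u}$) together with the real-part functional gives the inner product $\langle u,v\rangle={\rm Re}(u\overline{v})$ inducing the norm $|u|^2={\rm Re}(u\overline{u})$. The crucial special feature of a paravector $x=\sum_{i=0}^n x_i e_i$ is that $x\overline{x}=\overline{x}x=|x|^2 e_0=|x|^2$, a nonnegative real scalar; this is a direct computation using $e_ie_j+e_je_i=-2\delta_{ij}$. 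Hence for a nonzero paravector $x^{-1}=\overline{x}/|x|^2$ as already stated in the text.

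\smallskip

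Second, given $a\in\mathbb{R}_{0,n}$ arbitrary and $x\in\mathbb{R}^{n+1}$, I compute
$$|ax|^2={\rm Re}\big((ax)\overline{(ax)}\big)={\rm Re}\big(ax\,\overline{x}\,\overline{a}\big)={\rm Re}\big(a\,|x|^2\,\overline{a}\big)=|x|^2\,{\rm Re}(a\overline{a})=|x|^2|a|^2,$$
where the middle equality uses $x\overline{x}=|x|^2\in\mathbb{R}$ commuting with everything. Taking square roots gives $|ax|=|a||x|$. The identity $|xa|=|a||x|$ follows symmetrically: $|xa|^2={\rm Re}\big(xa\,\overline{a}\,\overline{x}\big)={\rm Re}\big(x\,|a|^2\,\overline{x}\big)$ — but here one must be slightly more careful since $\overline{a}\,a=|a|^2$ need not hold for general $a$; instead I would use $\overline{xa\,\overline{a}\,\overline{x}}=xa\overline{a}\,\overline{x}$ symmetry differently, or simply note $|xa|=|\overline{\overline{a}\,\overline{x}}|=|\overline{a}\,\overline{x}|$ and apply the first identity with $\overline{a}$ in place of $a$ (legitimate since $|\overline{a}|=|a|$) and $\overline{x}$ in place of $x$ (again a paravector with $|\overline{x}|=|x|$). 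This yields $|xa|=|\overline{a}||\overline{x}|=|a||x|$.

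\smallskip

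The only genuinely delicate point is the asymmetry between left and right multiplication: for a general Clifford number $a$ one has ${\rm Re}(a\overline{a})={\rm Re}(\overline{a}a)=|a|^2$ but $a\overline{a}$ itself is generally not scalar, so one cannot blindly pull $|a|^2$ out of a product on the right. The fix — routing the right-multiplication case through the conjugation anti-automorphism so that it becomes a left-multiplication case — is the one step worth stating explicitly; everything else is the short paravector computation $x\overline{x}=|x|^2$. I would therefore organise the write-up as: (i) establish $x\overline{x}=\overline{x}x=|x|^2$ for paravectors; (ii) derive $|ax|=|a||x|$ as above; (iii) derive $|xa|=|a||x|$ via $|xa|=|\overline{a}\,\overline{x}|$ and step (ii).
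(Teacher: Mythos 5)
The paper does not give its own proof of this lemma; it simply cites \cite[Theorem~3.14]{Gurlebeck}, so there is no in-paper argument to compare against. Your self-contained proof is correct and is the standard route: the two facts you rely on --- that $x\overline{x}=\overline{x}x=|x|^{2}$ is a real scalar when $x$ is a paravector, and that ${\rm Re}(u\overline{u})=\sum_{A}u_{A}^{2}=|u|^{2}$ for arbitrary $u\in\mathbb{R}_{0,n}$ (because ${\rm Re}(e_{A}\overline{e_{B}})=\delta_{AB}$ and $e_{A}\overline{e_{A}}=1$) --- give $|ax|^{2}={\rm Re}(ax\overline{x}\,\overline{a})=|x|^{2}{\rm Re}(a\overline{a})=|x|^{2}|a|^{2}$ cleanly. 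You also correctly flag the one spot where a naive symmetric argument breaks: $a\overline{a}$ need not be scalar for general $a$, so $|xa|$ cannot be handled by pulling $|a|^{2}$ out of ${\rm Re}(xa\overline{a}\,\overline{x})$; routing it through the anti-automorphism via $|xa|=|\overline{xa}|=|\overline{a}\,\overline{x}|$ and reapplying the left-multiplication case to the paravector $\overline{x}$ is exactly the right fix. No gaps.
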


Now we introduce some known properties of M\"{o}bius transformations of $\mathbb{B}_{n}$    by using the language of Clifford algebras, which shall be used in the sequeal. These results can be founded in \cite{Ahlfors,Stoll}.

It is known that any M\"{o}bius transformation $\psi$ of  $\mathbb{B}_{n}$ onto  itself has the form  $\psi=T \varphi_{a}$, where $T\in O(n)$ and $\varphi_{a}$ is M\"{o}bius transformations of  $\mathbb{B}_{n}$ with $\varphi_{a}(0)=a\in \mathbb{B}_{n}$, given by
\begin{eqnarray} \label{Mobius-transformation}
\varphi_{a}(x)=\frac{(1-|a|^{2})(a-x)+|a-x|^{2}a}{[x,a]^{2}},\quad x\in \mathbb{B}_{n}, \end{eqnarray}
where $$ [x,a]=\sqrt{1+|a|^{2}|x|^{2}-2\langle a, x\rangle}.$$
Here $\langle \cdot, \cdot\rangle$ is real inner product in $\mathbb{R}^{n}$.

The vector space $\mathbb{R}^{n}$ can be viewed as the paravector  in   $\mathbb{R}_{0,n-1}$,  and then
 $$\langle a, x\rangle={\rm{Re}}\,(x\overline{a})={\rm{Re}}\,(\overline{a}x).$$
Hence, $$[x,a]=\sqrt{1+|a|^{2}|x|^{2}-2{\rm{Re}}\,(x\overline{a})}=|1-x\overline{a}|.$$
Consequently,  the mapping $\varphi_{a}$ can be expressed as
$$\varphi_{a}(x)=\frac{(1-|a|^{2}+a\overline{(a-x)})(a-x)}{|1-x\overline{a}|^{2}}=\frac{(1 -a\overline{x})(a-x)}{|1-x\overline{a}|^{2}}
=(1 -x\overline{a})^{-1}(a-x).$$
Furthermore, it holds that
 \begin{eqnarray} \label{inverse}
\varphi_{a}^{-1}=\varphi_{a}.\end{eqnarray}
To see this, we should notice  that
 $$\varphi_{a}(x)=(1 -x\overline{a})^{-1}(a-x)=(a-x)(1-\overline{a}x)^{-1}.$$
Let $y=\varphi_{a}(x)$. Then
 $$ a-x=(1-x\overline{a})y=y-x\overline{a}y,$$
  which implies that
  $$ a-y=x(1-\overline{a}y) \Rightarrow x=(a-y)(1-\overline{a}y)^{-1}=\varphi_{a}(y).$$

Denote by $\mathcal{M}(\mathbb{B}_{n})$ M\"{o}bius transformations of $\mathbb{B}_{n}$ onto $\mathbb{B}_{n}$.
Recall a useful identity \cite[Theorem 2.1.3]{Stoll}
$$\frac{|\varphi_(x)-\varphi_(y)|^{2}}{(1-|\varphi_(x)|^{2})(1-|\varphi_(y)|^{2})}=\frac{|x-y|^{2}}{(1-|x|^{2})(1-|y|^{2})}, \quad \varphi\in \mathcal{M}(\mathbb{B}_{n}).$$
This formula implies that
\begin{eqnarray} \label{gradient-x-y}
 \|\nabla \varphi_{a} (x)\|= \overline{\lim}_{y\rightarrow x} \frac{|\varphi_{a}(x)-\varphi_{a}(y)|}{|x-y|}=  \frac{1-|\varphi_{a} (x)|^{2}}{1-|x|^{2}}.
 \end{eqnarray}

From  the identity \begin{eqnarray} \label{1}
 1-|\varphi_{a}(x)|^{2}=1-|a-x|^{2}|1-x\overline{a}|^{-2}=\frac{(1-|a|^{2})(1-|x|^{2})}{|1-x\overline{a}|^{2}},
\end{eqnarray}
it is easy to see that
 \begin{eqnarray} \label{gradient}
\|\nabla \varphi_{a} (x)\|=\frac{1-|a|^{2}}{|1-x\overline{a}|^{2}}\in (\frac{1-|a|}{1+|a|},\frac{1+|a|}{1-|a|}).
 \end{eqnarray}

\section{Proof of   Theorems \ref{Main-Ball}, \ref{Main-sharp-theorem}  and \ref{Theorem-Liu-vector}}
To prove  main results, we recall   the invariance of the Laplace equation \cite[Chapter 1.10, p. 22]{Hua}.
\begin{lemma}\label{Hua}
Let $a\in \mathbb{B}_n$.  If an independent variable undergoes the transformation $y=\varphi_{a}(x), x\in \mathbb{B}_n,$ and the function is transformed by
 \begin{eqnarray} \label{transformation}
Y(y)=\Big(\frac{|1-x\overline{a}|^{2}}{1-|a|^{2}}\Big)^{\frac{n}{2}-1} X(x),
\end{eqnarray}
 then
$$ (1-|x|^{2})^{\frac{n}{2}+1}\sum _{i=0}^{n-1} \frac{\partial^{2} X}{\partial x_{i}^{2}}=(1-|y|^{2})^{\frac{n}{2}+1}\sum _{i=0}^{n-1} \frac{\partial^{2} Y}{\partial y_{i}^{2}}.$$
 \end{lemma}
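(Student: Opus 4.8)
The plan is to recognize this identity as the conformal covariance of the Laplacian written out for the M\"obius map $\varphi_{a}$: first establish $\Delta X=\rho^{(n+2)/2}(\Delta Y)\circ\varphi_{a}$ by a direct chain-rule computation, then rescale. Here I abbreviate $\rho(x)=\frac{1-|a|^{2}}{|1-x\overline{a}|^{2}}$, and I would begin by recording three facts about $\rho$. (a) Since $\varphi_{a}$ is conformal, $\nabla\varphi_{a}(x)$ is a scalar multiple of an orthogonal matrix, and by (\ref{gradient}) that scalar is $\|\nabla\varphi_{a}(x)\|=\rho(x)$; hence $\nabla\varphi_{a}(x)^{T}\nabla\varphi_{a}(x)=\rho(x)^{2}I$, i.e.\ $\sum_{i}\partial_{i}(\varphi_{a})_{k}\,\partial_{i}(\varphi_{a})_{l}=\rho^{2}\delta_{kl}$. (b) By (\ref{1}), $\rho(x)=\frac{1-|\varphi_{a}(x)|^{2}}{1-|x|^{2}}$. (c) $\rho^{(n-2)/2}$ is harmonic on $\mathbb{B}_{n}$: for $a\ne0$, writing $a^{*}=a/|a|^{2}$ (so $|a^{*}|>1$), one has $|1-x\overline{a}|^{2}=|a|^{2}|x-a^{*}|^{2}$, hence $\rho(x)^{(n-2)/2}=c_{a,n}|x-a^{*}|^{2-n}$, which is harmonic on $\mathbb{B}_{n}$ since the pole $a^{*}$ lies outside $\overline{\mathbb{B}_{n}}$; the case $n=2$ is vacuous, as $\rho^{0}\equiv1$.

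Writing $w=\rho^{(n-2)/2}$, so that (\ref{transformation}) reads $X=w\,(Y\circ\varphi_{a})$, I would then expand $\Delta X$ by the product and chain rules, using (a) to collapse the pure second-order part to $w\rho^{2}(\Delta Y)\circ\varphi_{a}=\rho^{(n+2)/2}(\Delta Y)\circ\varphi_{a}$. The outcome is
$$\Delta X=\rho^{(n+2)/2}\,(\Delta Y)\circ\varphi_{a}\;+\;(Y\circ\varphi_{a})\,\Delta w\;+\;\sum_{k}\big[(\partial_{k}Y)\circ\varphi_{a}\big]\,w^{-1}\,\mathrm{div}\big(\rho^{n-2}\,\nabla(\varphi_{a})_{k}\big).$$
The second term drops out by (c). For the third, note that $\mathrm{div}(\rho^{n-2}\nabla h)=\rho^{n}\,\Delta_{g}h$, where $\Delta_{g}$ is the Laplace--Beltrami operator of the metric $g=\rho^{2}\delta=\varphi_{a}^{*}\delta$; since $\varphi_{a}$ is an isometry from $(\mathbb{B}_{n},\varphi_{a}^{*}\delta)$ onto $(\varphi_{a}(\mathbb{B}_{n}),\delta)$, each component $(\varphi_{a})_{k}$ is the pull-back of the Euclidean-harmonic coordinate $y_{k}$, so $\Delta_{g}(\varphi_{a})_{k}=0$. (Alternatively, $\mathrm{div}(\rho^{n-2}\nabla(\varphi_{a})_{k})=0$ can be verified directly by differentiating (\ref{Mobius-transformation}).) Thus $\Delta X=\rho^{(n+2)/2}(\Delta Y)\circ\varphi_{a}$.

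It remains only to rescale: substituting $\rho(x)=(1-|y|^{2})/(1-|x|^{2})$ with $y=\varphi_{a}(x)$ and multiplying by $(1-|x|^{2})^{\frac{n}{2}+1}$ gives
$$(1-|x|^{2})^{\frac{n}{2}+1}\,\Delta X=(1-|x|^{2})^{\frac{n}{2}+1}\Big(\frac{1-|y|^{2}}{1-|x|^{2}}\Big)^{\frac{n+2}{2}}(\Delta Y)(y)=(1-|y|^{2})^{\frac{n}{2}+1}\,(\Delta Y)(y),$$
since the powers of $1-|x|^{2}$ cancel; with $\Delta=\sum_{i=0}^{n-1}\partial_{x_{i}}^{2}$ on the left and $\Delta=\sum_{i=0}^{n-1}\partial_{y_{i}}^{2}$ on the right, this is exactly the claimed identity. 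I expect the only real obstacle to be the vanishing of the first-order-in-$Y$ terms, i.e.\ the identity $\mathrm{div}(\rho^{n-2}\nabla(\varphi_{a})_{k})=0$: conceptually it is the harmonicity of coordinate functions transported by the isometry $\varphi_{a}$, while by brute force it is a somewhat tedious differentiation of (\ref{Mobius-transformation}). Everything else is bookkeeping with $\rho$ and its two expressions (\ref{gradient}) and (\ref{1}).
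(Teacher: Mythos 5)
Your proof is correct. Note that the paper itself does not prove Lemma~\ref{Hua}; it simply invokes Hua's book (\cite[Chapter 1.10, p.~22]{Hua}), so there is no in-paper argument to compare against. Your derivation is a clean, self-contained proof via the conformal covariance of the Laplacian, and the three facts you isolate are exactly the right ones: (a) $\nabla\varphi_{a}(x)=\rho(x)\,O(x)$ with $O(x)$ orthogonal (which the paper itself uses in the form $\nabla\varphi_a(x)/\|\nabla\varphi_a(x)\|\in O(n)$ together with (\ref{gradient})), collapsing the second-order terms to $\rho^{2}(\Delta Y)\circ\varphi_a$; (b) $\rho=(1-|\varphi_a|^{2})/(1-|x|^{2})$ from (\ref{1}), which handles the final rescaling; and (c) harmonicity of the weight $w=\rho^{(n-2)/2}$, which you reduce correctly to the Newtonian kernel $|x-a^{*}|^{2-n}$ with pole at $a^{*}=a/|a|^{2}$ outside $\overline{\mathbb{B}_n}$ (using $|1-x\overline a|^{2}=|a|^{2}|x-a^{*}|^{2}$). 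The only step you assert rather than compute is $\operatorname{div}(\rho^{n-2}\nabla(\varphi_a)_k)=0$, but your conceptual justification is airtight: $\operatorname{div}(\rho^{n-2}\nabla h)=\rho^{n}\Delta_{g}h$ for the conformal metric $g=\rho^{2}\delta=\varphi_a^{*}\delta$, and since $\varphi_a:(\mathbb{B}_n,g)\to(\mathbb{B}_n,\delta)$ is by construction an isometry, $\Delta_{g}(\varphi_a)_k=(\Delta_\delta y_k)\circ\varphi_a=0$. The $n=2$ case degenerates gracefully ($w\equiv 1$, $\varphi_a$ holomorphic/anti-holomorphic hence harmonic), so the argument is uniform in $n\ge 2$. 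In short: the paper outsources this lemma; you actually prove it, by the standard Kelvin--M\"obius/conformal-geometry route, and the proof holds up.
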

Let $y=\varphi_{a}(x)$.  From the identity
 \begin{eqnarray} \label{x-y}
 |1-x\overline{a}||1-y\overline{a}|=1-|a|^{2},
 \end{eqnarray}
and  (\ref{inverse}),   the transformation in  (\ref{transformation}) can be rewritten as
  $$Y   (y)=\Big(\frac{1-|a|^{2}}{|1-y\overline{a}|^{2}}\Big)^{\frac{n}{2}-1} X( \varphi_{a}^{-1} (y) )=\Big(\frac{1-|a|^{2}}{|1-y\overline{a}|^{2}}\Big)^{\frac{n}{2}-1} X( \varphi_{a} (y) ).$$

Hence,   Lemma \ref{Hua} gives directly the following result.
  \begin{lemma}\label{Hua-0}
Let $a\in \mathbb{B}_n$ and $f$ be a harmonic function in $\mathbb{B}_n$. Then the function
$$f[\varphi_{a}](x):=\Big(\frac{1-|a|^{2}}{|1-x\overline{a}|^{2}}\Big)^{\frac{n}{2}-1} f\circ \varphi_{a}(x)$$
is still harmonic in $\mathbb{B}_{n}$.
 \end{lemma}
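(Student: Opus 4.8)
The plan is to deduce this directly from Lemma \ref{Hua} by specializing the abstract change-of-variables statement. First I would set $X = f$ and let $Y = f[\varphi_a]$ be defined by the formula in the statement; the point is to check that this $Y$ is exactly the function produced by Lemma \ref{Hua} under the transformation $y = \varphi_a(x)$. Indeed, Lemma \ref{Hua} asserts that if $Y(y) = \bigl(|1-x\overline{a}|^2/(1-|a|^2)\bigr)^{n/2-1} X(x)$ with $y = \varphi_a(x)$, then the weighted Laplacian $(1-|x|^2)^{n/2+1}\Delta X$ transforms into $(1-|y|^2)^{n/2+1}\Delta Y$. So I need to rewrite the weight $\bigl(|1-x\overline{a}|^2/(1-|a|^2)\bigr)^{n/2-1}$ in terms of $y$, and to express $X(x) = X(\varphi_a^{-1}(y))$ in terms of $y$.

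The key computational input is the identity (\ref{x-y}), namely $|1-x\overline{a}|\,|1-y\overline{a}| = 1-|a|^2$ when $y = \varphi_a(x)$, together with the involution property (\ref{inverse}), $\varphi_a^{-1} = \varphi_a$. From (\ref{x-y}) one gets $|1-x\overline{a}|^2/(1-|a|^2) = (1-|a|^2)/|1-y\overline{a}|^2$, so the weight becomes $\bigl((1-|a|^2)/|1-y\overline{a}|^2\bigr)^{n/2-1}$; and from (\ref{inverse}), $X(x) = f(\varphi_a^{-1}(y)) = f(\varphi_a(y))$. Hence $Y(y) = \bigl((1-|a|^2)/|1-y\overline{a}|^2\bigr)^{n/2-1} f(\varphi_a(y))$, which is precisely $f[\varphi_a]$ evaluated at $y$ after renaming the variable. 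This identification is already carried out in the excerpt immediately before the statement, so that part is essentially bookkeeping.

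With this identification in hand, the conclusion is immediate: since $f$ is harmonic in $\mathbb{B}_n$, we have $\Delta X \equiv 0$, so the left-hand side of the displayed identity in Lemma \ref{Hua} vanishes identically; therefore $(1-|y|^2)^{n/2+1}\Delta Y \equiv 0$ on $\mathbb{B}_n$. Because $1-|y|^2 > 0$ throughout $\mathbb{B}_n$, we conclude $\Delta Y \equiv 0$, i.e. $f[\varphi_a]$ is harmonic in $\mathbb{B}_n$, noting that $\varphi_a$ maps $\mathbb{B}_n$ onto $\mathbb{B}_n$ so the composition and the weight are well-defined and smooth there.

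I do not anticipate a genuine obstacle here; the only thing requiring care is making sure the rewriting of the weight via (\ref{x-y}) and of the argument via (\ref{inverse}) is done consistently (and that $n/2-1$ versus $n/2+1$ exponents are not conflated), but these are purely algebraic manipulations with identities already established in the Preliminaries.
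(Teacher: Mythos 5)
Your proof is correct and follows exactly the same route as the paper: rewrite the weight in Lemma \ref{Hua} via the identity $|1-x\overline{a}|\,|1-y\overline{a}|=1-|a|^2$ and the involution $\varphi_a^{-1}=\varphi_a$ so that $Y$ is recognized as $f[\varphi_a]$, then use harmonicity of $f$ and positivity of $1-|y|^2$ to conclude $\Delta Y\equiv 0$. No discrepancies.
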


 Now we are in a position to prove Theorem  \ref{Main-sharp-theorem}.
\begin{proof}[Proof of  Theorem  \ref{Main-sharp-theorem}]
Firstly, let us prove the estimate (\ref{Main-sharp}) in the special case  $x=0$, i.e.
\begin{eqnarray} \label{Main-0}
|\nabla f(0)|\leq n f(0),
\end{eqnarray}
for the positive harmonic function $f$ defined in $\mathbb{B}_{n}$.

From the Poisson-Herglotz representation, it holds that
  $$f(x)=\int_{\partial \mathbb{B}_{n}}P(x, \xi)  d\mu(\xi), \quad x\in \mathbb{B}_{n},$$
where $\mu$ is a positive Borel measure on $\partial \mathbb{B}_{n}$ such that $\int_{\partial \mathbb{B}_{n}}  d\mu(\xi)=f(0)$.\\
Note that
\begin{eqnarray} \label{Poisson}
\nabla P_{\xi}(x)=-\frac{2x}{|x-\xi|^{n}}-\frac{n(1-|x|^{2})}{|x-\xi|^{n+2}}(x-\xi),  \quad (x, \xi)\in \mathbb{B}_{n} \times \partial \mathbb{B}_{n}.  \end{eqnarray}
In particular,
$$\nabla P_{\xi}(0)=n \xi,  \quad \xi \in \partial \mathbb{B}_{n}. $$
Hence, $$|\nabla f(0)|=n|\int_{\partial \mathbb{B}_{n}}  \xi d\mu(\xi)|\leq n\int_{\partial \mathbb{B}_{n}}    d\mu(\xi)=nf(0),$$
where the equality is attained if and only if the measure $\mu$ is  a singleton, that is to say, there exits some $\xi \in \partial \mathbb{B}_{n}$ such that $$\mu(\{\xi\})=f(0),\ \mu(\partial \mathbb{B}_{n}\setminus \{\xi\})=0.$$
This shows   (\ref{Main-0})   and that the equality in (\ref{Main-0}) is attained   if and only if  $f(x)=f(0)P_{\xi}(x)$ for some $\xi \in \partial \mathbb{B}_{n}$,  and in this case, we have, by (\ref{Poisson}),
 \begin{eqnarray*}
  & &  (|x|^{2}-1)\nabla f(x)+(n-2)xf(x)
\\
&=&  f(0)(|x|^{2}-1)\Big( -\frac{2x}{|x-\xi|^{n}}-\frac{n(1-|x|^{2})}{|x-\xi|^{n+2}}(x-\xi) -\frac{(n-2)x}{|x-\xi|^{n}} \Big)
\\
&=&f(0) \frac{n(|x|^{2}-1)}{|x-\xi|^{n+2}} ((|x|^{2}-1)(x-\xi)-x |x-\xi|^{2})
\\
&=& f(0)  \frac{n(|x|^{2}-1)}{|x-\xi|^{n+2}} ( |x|^{2}-1  -x \overline{(x-\xi)})(x-\xi)
\\
&=& f(0)  \frac{n(|x|^{2}-1)}{|x-\xi|^{n+2}} ( x \overline{\xi}-1)(x-\xi).
\end{eqnarray*}
By  Lemma \ref{modulus}, it follows that
$$|( x \overline{\xi}-1)(x-\xi)|=|  x \overline{\xi}-1||x-\xi|=|x-\xi|^{2}. $$
Therefore,  the harmonic function  $f(x)=f(0)P_{\xi}(x)$ satisfies the following identity
  \begin{eqnarray} \label{extremal-function-0}
   |(|x|^{2}-1)\nabla f(x)+(n-2)xf(x)|=nf(0)\frac{1-|x|^{2}}{|x-\xi|^{n}} =nf(x), \quad (x, \xi)\in \mathbb{B}_{n} \times \partial \mathbb{B}_{n}.
  \end{eqnarray}

Secondly, we prove the conclusion in the general case  $x=a$.  Fix $a\in \mathbb{B}_{n}$. By Lemma \ref{Hua-0},  the   function $f[\varphi_{a}](x)$ is harmonic in $\mathbb{B}_{n}$.   Hence,  by applying   the inequality (\ref{Main-0}) to the  positive and harmonic  function $f[\varphi_{a}](x)$, we have
\begin{eqnarray} \label{Main-0-h}
|\nabla f [\varphi_{a}] (0)| \leq n f[\varphi_{a}] (0).
\end{eqnarray}
Direct calculations  gives that
$$\nabla \frac{1}{|1-x\overline{a}|^{n-2}}=\frac{(2-n)(|a|^{2}x-a)}{|1-x\overline{a}|^{n-2}}, \quad x\in \mathbb{B}_{n},$$
and
$$ \nabla  (\frac{1}{|1-x\overline{a}|^{n-2}} f\circ \varphi_{a}(x))|_{x=0}= (|a|^{2}-1)\nabla f(a)+(n-2)af(a),$$
then (\ref{Main-0-h}) reduces into
$$|(|a|^{2}-1)\nabla f(a)+(n-2)af(a)|\leq n f(a). $$

Let  us consider the case of   the equality in (\ref{Main-sharp}) is attained at $x=a$, that is
$$|\nabla f [\varphi_{a}] (0)| =n f[\varphi_{a}] (0).$$
Therefore, the  previously obtained result of $x=0$ gives that $f[\varphi_{a}](x)=f[\varphi_{a}](0)P_{\xi}(x)$ for some $\xi \in \partial \mathbb{B}_{n}$. More precisely,
$$\Big(\frac{1-|a|^{2}}{|1-x\overline{a}|^{2}}\Big)^{\frac{n}{2}-1} f\circ \varphi_{a}(x)=(1-|a|^{2})^{\frac{n}{2}-1} f(a)P_{\xi}(x),\quad x\in \mathbb{B}_{n}.$$
Thus
$$ f\circ \varphi_{a}(x)= f(a)|1-x\overline{a}|^{n-2}P_{\xi}(x), \quad x\in \mathbb{B}_{n}.$$
Replacing $x$ with $\varphi_{a}(x)$ in the above formula and noticing (\ref{inverse}),  we obtain
$$f(x)=f(a)|1-\varphi_{a}(x)\overline{a}|^{n-2}P_{\xi}\circ \varphi_{a}(x), \quad x\in \mathbb{B}_{n}. $$

Finally, we verify that every positive  harmonic function $f$ defined by (\ref{extremal-function}) satisfies  the equality in  (\ref{Main-sharp})  for all  $x \in \mathbb{B}_{n}$ and $\xi \in   \partial \mathbb{B}_{n}$.  Observing  (\ref{x-y}) and (\ref{1}), straightforward calculations give
\begin{eqnarray*}
  & &  \nabla\log( (1-|x|^{2})^{\frac{n}{2}-1} f(x))
\\
&=& \nabla\log\Big( f(a)(1-|x|^{2})^{\frac{n}{2}-1} \Big(\frac{1-|a|^{2}}{|1-x\overline{a}|}\Big)^{n-2}P_{\xi}\circ \varphi_{a}(x)\Big)
\\
&=&\nabla\log\Big(  \Big(\frac{(1-|a|^{2})(1-|x|^{2})}{|1-x\overline{a}|^{2}}\Big)^{\frac{n}{2}-1}P_{\xi}\circ \varphi_{a}(x)\Big)
\\
&=&\nabla\log( (1-|\varphi_{a}(x)|^{2})^{\frac{n}{2}-1}P_{\xi}\circ \varphi_{a}(x))
\\
&=&\frac{n}{2}\nabla\log \frac{1-|\varphi_{a}(x)|^{2} }{|\varphi_{a}(x)-\xi |^{2}}
\\
&=& (\nabla\log ((1-|\cdot|^{2})^{\frac{n}{2}-1}P_{\xi}(\cdot)) )(\varphi_{a}(x)) \nabla \varphi_{a}(x).
\end{eqnarray*}
Observe  that (\ref{extremal-function-0})  has a equivalent representation
$$|\nabla\log( (1-|x|^{2})^{\frac{n}{2}-1}P_{\xi}(x)) |= \frac{n}{1-|x|^{2}},  \quad (x, \xi)\in \mathbb{B}_{n} \times \partial \mathbb{B}_{n}.$$
Combining this with  $\nabla \varphi_{a}(x)/\|\nabla \varphi_{a}(x)\| \in O(n)$ and  (\ref{gradient-x-y}), we infer that, for all $(x, \xi)\in \mathbb{B}_{n} \times \partial \mathbb{B}_{n}$,
\begin{eqnarray*}
  & & |\nabla\log( (1-|x|^{2})^{\frac{n}{2}-1} f(x))|
\\
&=& |\nabla\log ((1-|\cdot|^{2})^{\frac{n}{2}-1}P_{\xi}(\cdot)) )(\varphi_{a}(x))| \| \nabla \varphi_{a}(x)\|
\\
&=& \frac{n}{1-| \varphi_{a}(x)|^{2}} \| \nabla \varphi_{a}(x)\|
\\
&=& \frac{n}{1-|x|^{2}},
\end{eqnarray*}
which completes the proof.
\end{proof}

  With the help of  Theorem  \ref{Main-sharp-theorem},  we turn  back to prove Theorem  \ref{Main-Ball}.
\begin{proof}[Proof of  Theorem  \ref{Main-Ball}]
By Theorem  \ref{Main-sharp-theorem}, it holds that, for the   harmonic function $f:\mathbb{B}_{n} \rightarrow \mathbb{R}^{+}$,
$$|(|x|^{2}-1)\nabla f(x)+(n-2)xf(x)|\leq n f(x), \quad x\in \mathbb{B}_{n},$$
which has a equivalent  representation
$$|\nabla\log( (1-|x|^{2})^{\frac{n}{2}-1} f(x)) |\leq   \frac{n}{1-|x|^{2}}, \quad x\in \mathbb{B}_{n}.$$
Hence, by Cauchy-Schwarz inequality,
$$|d (\log ((1-|x|^{2})^{\frac{n}{2}-1} f(x)) ) |\leq |\nabla\log( (1-|x|^{2})^{\frac{n}{2}-1} f(x)) | |dx|\leq   \frac{n|dx|}{1-|x|^{2}}.$$
Recall that  the hyperbolic metric on $\mathbb{B}_n$ is given by
$$ d_{\mathbb{B}_{n}}(x,y)=2\tanh^{-1}(|\varphi_{y}(x)|)=\log \frac{1+|\varphi_{y}(x)|}{1-|\varphi_{y}(x)|},$$
and its responding  element of arclength is
$$ds=\frac{2|dx|}{1-|x|^{2}}.$$
Integrating both sides of the above inequality along geodesics for the  hyperbolic metric from $x$ to $y$, we have
$$| \log \frac{(1-|x|^{2})^{\frac{n}{2}-1}f(x)}{(1-|y|^{2})^{\frac{n}{2}-1}f(y)} | \leq \frac{n}{2}d_{\mathbb{B}_{n}}(x,y),$$
which implies that
$$d_{\mathbb{R}^{+}}(f(x),f(y))=| \log \frac{f(x)}{f(y)} | \leq \frac{n}{2}d_{\mathbb{B}_{n}}(x,y)+(\frac{n}{2}-1)| \log \frac{1-|y|^{2}}{1-|x|^{2}} |. $$
Combining this with the inequality
\begin{eqnarray}  \label{Fact}
\frac{1+|\varphi_{y}(x)|}{1-|\varphi_{y}(x)|}\geq  \frac{1-|y|^{2}}{1-|x|^{2}}, \quad x, y \in \mathbb{B}_{n},  \end{eqnarray}
we get
$$d_{\mathbb{R}^{+}}(f(x),f(y))\leq(n-1)d_{\mathbb{B}_{n}}(x,y).$$

To see (\ref{Fact}), using the identity (\ref{1}), we first obtain
$$\frac{  1+|\varphi_{y}(x)| }{1-|\varphi_{y}(x)| } = \frac{(1+|\varphi_{y}(x)|)^{2}}{1-|\varphi_{y}(x)|^{2}}=\frac{(1+|\varphi_{y}(x)|)^{2}|1-x\overline{y}|^{2}}{(1-|x|^{2})(1-|y|^{2})},$$
then the question reduces into    proving that, for $x, y \in \mathbb{B}_{n}$ with  $|x|>|y|$,
\begin{eqnarray} \label{Main-Fact}|1-x\overline{y}|+|x-y|\geq1-|y|^{2}.\end{eqnarray}
If  $|x-y|\geq1-|y|^{2}$, it is a trivial assertion. Otherwise, for $|x-y|<1-|y|^{2}$,
\begin{eqnarray*}
(\ref{Main-Fact})&\Leftrightarrow& |1-x\overline{y}|^{2}\geq(1-|y|^{2}-|x-y|)^{2}=(1-|y|^{2})^{2}+|x-y|^{2}-2|x-y|(1-|y|^{2})
\\
&\Leftrightarrow&(1-|x|^{2})(1-|y|^{2})\geq (1-|y|^{2})^{2}-2|x-y|(1-|y|^{2})
\\
&\Leftrightarrow&  1-|x|^{2} \geq 1-|y|^{2}-2|x-y|
\\
&\Leftrightarrow&   |x|^{2}-|y|^{2} \leq2|x-y|.
\end{eqnarray*}
Under the assumption  that $x, y \in \mathbb{B}_{n}$ with  $|x|>|y|$, it holds naturally that
$$ |x|^{2}-|y|^{2}\leq 2(|x|-|y|)\leq2|x-y|.$$
Now the proof is complete.
\end{proof}

\begin{remark}
In the proof of Theorem  \ref{Main-Ball}, we  give a direct and basic proof of inequality  (\ref{Fact}). As pointed out by an anonymous reader,     (\ref{Fact}) is a consequence of the   known  inequality
 \begin{eqnarray}\label{tri}
\frac{|\rho(x,z)-\rho(z,y)|}{1-\rho(x,z)\rho(z,y)} \leq  \rho(x,y) \leq  \frac{\rho(x,z)+\rho(z,y)}{1+\rho(x,z)\rho(z,y)}, \quad x,y,z\in \mathbb{B}_{n}, \end{eqnarray}
where $\rho(x,y)=|\varphi_{y}(x)|$ is  the so-called pseudo-hyperbolic metric on $\mathbb{B}_{n}$.

To see this, we need only to consider    $x,y\in \mathbb{B}_{n}$ with $|x|\geq |y|$. In this case, it follows that
 \begin{eqnarray}\label{tri-1}\frac{1-|y|^{2}}{1-|x|^{2}}\leq \frac{(1+|x|)(1-|y|)}{(1-|x|)(1+|y|)}
=\frac{1+\frac{|x|-|y|}{1-|x||y|}}{1-\frac{|x|-|y|}{1-|x||y|}}.\end{eqnarray}
According to (\ref{tri}), it holds that
\begin{eqnarray}\label{tri-2}\frac{|x|-|y|}{1-|x||y|} =\frac{|\rho(x,0)-\rho(0,y)|}{1-\rho(x,0)\rho(0,y)} \leq \rho(x,y). \end{eqnarray}
Note that $(1+t)(1-t)^{-1}$ is a increasing function for $t\in (0,1)$,  then (\ref{tri-1}) and (\ref{tri-2}) gives the desired inequality (\ref{Fact}).
\end{remark}

\begin{proof}[Proof of  Theorem  \ref{Theorem-Liu-vector}]
Let $l\in \partial \mathbb{B}_{m}$. For the harmonic $f:\mathbb{B}_{n}\rightarrow \mathbb{B}_{m}$, consider the scalar harmonic function $g=\langle f, l \rangle$, where $\langle \cdot, \cdot\rangle$ is real inner product in $\mathbb{R}^{m}$.  Now the scalar harmonic function $g:\mathbb{B}_{n}\rightarrow (-1,1)$ satisfies the condition of (\ref{Liu}). Hence,
$$|\nabla \langle f(x), l \rangle| = |(\nabla  f(x))^{T}\cdot l  |  \leq \frac{|\mathbb{B}_{n-1}|}{|\mathbb{B}_{n}|}\frac{2}{1-|x|^{2}},\quad x\in \mathbb{B}_{n},$$
where $\cdot$ denotes the matrix product of $(\nabla  f(x))^{T}\in \mathbb{R}^{n\times m}$ with $l\in  \mathbb{R}^{m\times1}$.
Due to the arbitrariness of $l\in \partial \mathbb{B}_{m}$, we obtain
$$\|\nabla  f(x)\|\leq \frac{|\mathbb{B}_{n-1}|}{|\mathbb{B}_{n}|}\frac{2}{1-|x|^{2}},\quad x\in \mathbb{B}_{n},$$
as desired.
\end{proof}

\section{Proof of   Theorems \ref{Theorem-Zhang-improved} and \ref{Wang}}

 Before proving the theorem, we recall the concepts of monogenic functions   in Clifford analysis and octonionic analysis and show that they are subclasses of harmonic functions.

First, we give the definition of  monogenic functions  in Clifford analysis \cite{Gurlebeck}.
\begin{definition}\label{monogenic-Clifford}
Let $\Omega \subset \mathbb{R}^{n+1}$ and   $f:\Omega \rightarrow \mathbb{R}_{0,n}$   be a Clifford algebra valued $C^{1}$ function.  The function $f=\sum_{A}  e_{A}   f_{A}$ is called  (left)  monogenic in $\Omega $ if
$$ Df(x):=\sum _{i=0}^{n}e_{i} \frac{\partial f}{\partial x_{i}}(x)
= \sum _{i=0}^{n} \sum_{A} e_{i}e_{A} \frac{\partial f_{A}}{\partial x_{i}}(x)=0, \quad x\in \Omega. $$
And  the function $f$ is called  (left)  anti-monogenic in $\Omega $ if
$$ \overline{D}f(x):=\sum _{i=0}^{n}\overline{e_{i}} \frac{\partial f}{\partial x_{i}}(x)= \sum _{i=0}^{n} \sum_{A}\overline{ e_{i}}e_{A} \frac{\partial f_{A}}{\partial x_{i}}(x)=0, \quad x\in \Omega. $$
\end{definition}
Due to the non-commutation of Clifford algebra,   the right   monogenic  functions could be defined similarly.
Note that all monogenic functions on $\mathbb{B}_{n+1}$ is real analytic. For the Clifford algebra valued $C^{2}$  functions $f$, by the association of Clifford algebra,  it holds that
\begin{eqnarray}  \label{monogenic-harmonic}
D\overline{D}f(x)=\overline{D} Df(x) =\Delta _{n+1} f(x),
 \end{eqnarray}
where $\Delta _{n+1} $ is Laplace operator in $\mathbb{R}^{n+1}$.

In fact, Definition \ref{monogenic-Clifford} can also be built  in octonionic analysis.
\begin{definition}\label{monogenic-octonion}
Denote by $\mathbb{O}$ the non-commutative and non-associative   algebra with  canonical  vector basis $\{e_{0}=1, e_1,e_2,\ldots,e_7\}$. Let $\Omega \subset \mathbb{O}$ and   $f:\Omega \rightarrow \mathbb{O}$   be a octonionic valued $C^{1}$ function.  The function $f=\sum_{i=0}^{7}  e_{i} f_{i}$ is called  (left)  monogenic in $\Omega $ if
$$ \mathcal{D}f(x):=\sum _{i=0}^{7} e_{i} \frac{\partial f}{\partial x_{i}}(x)
= \sum _{i=0}^{7} \sum_{j=0}^{7} e_{i}e_{j} \frac{\partial f_{j}}{\partial x_{i}}(x)=0, \quad x\in \Omega. $$
And  the function $f$ is called  (left)  anti-monogenic in $\Omega $ if
$$ \overline{\mathcal{D}}f(x):=\sum _{i=0}^{7}\overline{ e_{i} }\frac{\partial f}{\partial x_{i}}(x)
= \sum _{i=0}^{7} \sum_{j=0}^{7}\overline{ e_{i}}e_{j} \frac{\partial f_{j}}{\partial x_{i}}(x)=0, \quad x\in \Omega. $$
\end{definition}

Even though the algebra of octonions is  non-associative,  (\ref{monogenic-harmonic}) still holds in the octonionic setting. Indeed,   the Artin theorem shows  that the subalgebra generated by two elements ($\mathcal{D}$ and $f$) in octonions is associative, which implies
\begin{eqnarray}  \label{monogenic-harmonic-o}
\Delta   f(x) = (\overline{\mathcal{D}}\mathcal{D}) f(x)=\overline{\mathcal{D}}( \mathcal{D}f(x)),
 \end{eqnarray}
where $\Delta $ is Laplace operator in $\mathbb{R}^{8}$.

Hence,  monogenic functions in Clifford analysis and  octonionic analysis belong to   harmonic functions from (\ref{monogenic-harmonic}) and (\ref{monogenic-harmonic-o}).

Since the proof of  Theorem   \ref{Wang} is completely  similar to  Theorem   \ref{Theorem-Zhang-improved},  we only show Theorem   \ref{Theorem-Zhang-improved} in this section.
\begin{proof}[Proof of  Theorem  \ref{Theorem-Zhang-improved}]
Let $f$ be as described in  Theorem  \ref{Theorem-Zhang-improved}. First, if $a=0$ (that is $f(0)=0$), then \cite[Theorem 3.1]{Zhang16} gives that
\begin{eqnarray}  \label{Zhang-0}
|f(x)| \leq \frac{1}{ \sqrt[n+1]{2}-1} |x|, \quad x \in \mathbb{B}_{n+1}.\end{eqnarray}
Now  (\ref{Zhang-improved}) at $x=0$ is obtained. Otherwise,  as in the   prove of  Theorem  \ref{Main-Ball}, consider the Clifford algebra valued  harmonic function
 $$g_{1}(x)=\Big(\frac{1-|a|^{2}}{|1-x\overline{a}|^{2}}\Big)^{\frac{n+1}{2}-1} f\circ \varphi_{a}(x), \quad x\in \mathbb{B}_{n+1}.$$
In view of the estimate  (\ref{gradient}), set
$$g(x)= \Big(\frac{1-|a|}{1+|a|}\Big)^{\frac{n-1}{2}}g_{1}(x)=\Big(\frac{1-|a|}{|1-x\overline{a}|}\Big)^{ n-1 } f\circ \varphi_{a}(x)$$
with $|g(x)|<1$ for $x\in \mathbb{B}_{n+1}$.
 Applying   the inequality (\ref{Zhang-0}) to the   harmonic function $g(x)$,
we obtain
$$  \Big(\frac{1-|a|}{|1-x\overline{a}|}\Big)^{ n-1 } |f\circ \varphi_{a}(x)|\leq \frac{1}{ \sqrt[n+1]{2}-1} |x|, \quad x\in \mathbb{B}_{n+1}.$$
Let $y=\varphi_{a}(x)$. From the identity (\ref{x-y}), we have
$$   |f (y)|\leq \frac{1}{ \sqrt[n+1]{2}-1} \Big(\frac{1+|a|}{|1-y\overline{a}|}\Big)^{ n-1 } |\varphi_{a}^{-1}(y)|, \quad y\in \mathbb{B}_{n+1}.$$
Thus the fact $\varphi_{a}=\varphi_{a}^{-1}$ in (\ref{inverse})  gives the desired inequality
$$|f(x)| \leq \frac{(1+|a|)^{n}}{ \sqrt[n+1]{2}-1} \frac{|x-a|}{|1-\overline{a}x|^{n+1}}, \quad x \in \mathbb{B}_{n+1}.$$
The proof is completed.
\end{proof}
\textbf{Declarations}
\\
\textbf{Conflict of interest}
There is no financial or non-financial interests that are directly or indirectly related to the work submitted for publication.
\\
\textbf{Funding}
 This work was supported by  the Anhui Provincial Natural Science Foundation (No. 2308085MA04),  the   National Natural Science Foundation of China  (Nos. 11801125, 12301097) and the Fundamental Research Funds for the Central Universities (Nos. JZ2023HGQA0117, JZ2023HGTA0169). \\
\textbf{Data availability}
Data sharing is not applicable to this article as no datasets were generated  during the current study.

\bibliographystyle{amsplain}

\vskip 10mm

\end{document}